\documentclass[a4paper,10pt]{amsart}
\usepackage{palatino, mathpazo}
\usepackage[plainpages=false,colorlinks=true,linkcolor=blue,citecolor=blue,urlcolor=blue]{hyperref}
\usepackage{amsfonts,latexsym,rawfonts,amsmath,amssymb,amsthm, mathrsfs, lscape}
\usepackage{verbatim}
\usepackage{stmaryrd}
\usepackage[T1]{fontenc}
\usepackage[utf8]{inputenc}
\usepackage{amsmath,amssymb,amsthm,mathtools,array,booktabs}
\usepackage[margin=3.6cm]{geometry}
\usepackage[colorlinks=true,linkcolor=blue,citecolor=blue,urlcolor=blue]{hyperref}

\newtheorem{theorem}{Theorem}[section]
\newtheorem{proposition}[theorem]{Proposition}
\newtheorem{corollary}[theorem]{Corollary}
\newtheorem{lemma}[theorem]{Lemma}
\newtheorem{remark}[theorem]{Remark}
\newcommand{\C}{\mathbb C}
\newcommand{\Q}{\mathbb Q}
\newcommand{\A}{\mathbb A}
\newcommand{\PP}{\mathbb P}
\newcommand{\Hilb}{\operatorname{Hilb}}
\newcommand{\Crit}{\operatorname{Crit}}
\newcommand{\Tr}{\operatorname{Tr}}

\begin{document}

\title{Local monodromy, Behrend function and  Hilbert scheme of points on $\C^3$}

\author{Yunfeng Jiang}
\address{Department of Mathematics\\ University of Kansas\\ 405 Snow Hall 1460 Jayhawk Blvd\\Lawrence KS 66045 USA} 
\email{y.jiang@ku.edu}
\date{}
\maketitle

\begin{abstract}
We prove  a criteria for the Behrend function values on the Hilbert scheme of $n$-points on $\C^3$ from local monodromy
and the Denef--Loeser's  Lefschetz trace formula of jet spaces.
If the power two, three, or four of the monodromy  is unipotent, it reduces respectively to a Hessian
parity formula, a cubic-complement formula, or a quartic
formula.  This gives a Milnor fiber and monodromy based proof of nonconstancy of the Behrend function 
on $\Hilb^n(\C^3)$ after Jelisiejew-Kool-Schmiermann.  We also calculate two ideals in $\Hilb^9(\C^3)$ and $\Hilb^{12}(\C^3)$ so that the Behrend function is not $\pm 1$, answering a question of Ricolfi. 
\end{abstract}

\section{Introduction}

We work over $\C$ in this paper.  The Donaldson-Thomas invariants of Calabi-Yau threefolds \cite{Thomas}, \cite{B} provide new phenomenon on the classical moduli  schemes. For instance, the local charts of the Donaldson-Thomas moduli space of stable sheaves on a Calabi-Yau threefold $Y$ are critical locus of holomorphic functions.
This gives the Donaldson-Thomas moduli space a $(-1)$-shifted symplectic scheme structure.  If $Y=\C^3$, the simplest Calabi-Yau threefold,  then the degree zero  Donaldson-Thomas moduli space is the Hilbert scheme $X_n:=\Hilb^n(\C^3)$ of points on $\C^3$, which is a global critical locus. 

The degree zero Donaldson-Thomas invariants of $\C^3$ is given by the weighted Euler characteristic of $X_n$, weighted by the Behrend function on $X_n$. The generating function of the degree zero Donaldson-Thomas invariants is the MacMahon function, see \cite{MNOP2}, \cite{Li},  \cite{B}, \cite{BF}, \cite{LP}.  

The Behrend function $\nu_X: X\to \mathbb Z$ for a scheme $X$ is an integer valued constructible function, which plays an important role in Donaldson-Thomas theory for Calabi-Yau threefolds.  Behrend's celebrated theorem \cite{B} states that the Donaldson-Thomas virtual count is the $\nu_X$-weighted Euler characteristic of $X$ if $X$ is the moduli space of stable sheaves on a Calabi-Yau threefold $Y$.

Behrend-Fantechi's method \cite{BF} for calculating the weighted Euler characteristic of $X_n$ is to calculate the Behrend function on the $\C^*$-fixed points on $\Hilb^n(\C^3)$ (which are given by monomial ideals).  The Behrend function is $(-1)^n$ for each $\C^*$-fixed ideal. Thus, it was conjectured in the community that the Behrend function $\nu_{X_n}$ is a global constant $(-1)^n$.
But from the complexity of the Hilbert scheme $X_n$ when $n$ is large (which is not reduced, and not even irreducible, see \cite{Jelisiejew}), the conjecture is false and was first proved in a recent paper \cite{JKS}, where Jelisiejew-Kool-Schmiermann calculated that 
$$
\nu_{\Hilb^{24}(\C^3)}(I)=-1
$$
for an ideal $I\in \Hilb^{24}(\C^3)$ of length $24$.  
Jelisiejew-Kool-Schmiermann used twice of the result of  equivariant symmetric obstruction theory as in \cite{B}, \cite{BF} to calculate the above value.  The theorem in \cite{BF} says that if a scheme $X$ admits a $\C^*$-equivariant symmetric obstruction theory  and if $P$ is an isolated fixed point, then 
$$
\nu_{X}(P)=(-1)^{\dim(T_P(X))}.
$$

Recall from  \cite{Sze08}, \cite{Ric22}, \cite{Ric24},  the scheme  $X_n$ is a global critical locus $\Crit(f_n)$, where $f_n: M\to\C$ is a trace function on a smooth manifold $M$ of dimension $N=2n^2+n$.  Here $$M=\widetilde{M}/\mathrm{GL}_n,$$ 
where $\widetilde M\subset \mathrm{End}(\C^n)^3\times\C^n$ is the open subset of cyclic commuting quadruples. The potential function is
\[
 f_n=\Tr(A[B,C]).
\]

Then for any $P\in X_n$, Behrend \cite{B} shows that
$$
\nu_{X_n}(P)=(-1)^{N}(1-\chi(F_P))
$$
where $\chi(F_P)$ is the Euler characteristic of the Milnor fiber of $f_n$ at $P\in X_n$. Thus, the Behrend function is not globally constant if and only if the Euler characteristic $\chi(F_P)$ is not globally zero. 
In this paper we give a proof of the nonconstancy of the Behrend function $\nu_{X_n}$ from the results of the Euler characteristic of the  Milnor fiber without using any $\C^*$-equivariant symmetric obstruction theory.  We state the main result. 

\subsection{Main result}
From \cite{Loeser}, the Milnor fibration of $f_n$ 
determines  an automorphism on the Milnor fiber $F_{P}$, defined
up to homotopy and called the local monodromy $T_P$ at $P$. In particular the singular
cohomology groups $H^i(F_P,\Q)$ are endowed with an automorphism $T_P$.
We are interested in the case that 
\begin{equation}\label{eq:nilpotence}
 (T_P^m-I)^q=0\quad\text{on every }H^j(F_P,\Q)
 \quad\text{for some }q\ge1.
\end{equation}
We call $T_P^m$ is \emph{unipotent} if (\ref{eq:nilpotence}) satisfies. 
Put $E=\operatorname{End}(\C^n)^3$ and write $z=(a,b,c)\in E$.
Define
\begin{align*}
 Q(z)=\Tr\bigl(A_0[b,c]+a[B_0,c]+a[b,C_0]\bigr),\quad \quad
 C(z)=\Tr(a[b,c]).
\end{align*}
Then the exact expansion is 
\begin{equation}\label{eq:exacttrace}
   \mathrm{Tr}(A_0+a)[B_0+b, C_0+c]=Q(z)+C(z).
\end{equation}

Let $B$ be the polarization of $Q$, with $Q(z)=B(z,z)$,
and set
\[
 r=\operatorname{rank} B=N-\dim T_P X_n,
 \qquad K=\ker B,\qquad c=C|_{K}.
\]
Here $K$ is the kernel in the \emph{matrix space} $E$. 
Choose a complement $L$ to $K$ and define the invertible map
\[
 \mathsf B:L\longrightarrow L^*,\qquad
 \mathsf B(u)=B(u,-).
\]
For $z\in K$, let
\begin{align*}
 \ell_z(w)&=(dC)_z(w)\quad(w\in L),\\
 h(z)&=-\frac14\ell_z(\mathsf B^{-1}\ell_z),
       \label{eq:tracequartic}\\
 \Sigma&=\{z\in K:(dC)_z=0\}.
\end{align*}
The derivative in this definition
is explicitly
\[
 (dC)_{(a,b,c)}(\alpha,\beta,\gamma)
 =\Tr\bigl(\alpha[b,c]+a[\beta,c]+a[b,\gamma]\bigr).
\]
Thus all inputs are obtained by finite linear algebra and polynomial
operations on the commuting triple. Define the integers
\[
 b_{3}=\chi_c\bigl(\PP(K)\setminus V(c)\bigr),\qquad
 b_{4}=\chi_c\bigl(\PP(\Sigma)\setminus V(h)\bigr).
\]
We use underlying reduced spaces and
$\PP(\Sigma)=(\Sigma\setminus\{0\})/\C^*$, and  empty spaces have
Euler characteristic zero.

\begin{theorem}\label{thm:intromain}
The following statements hold at $P\in X_n$.
\begin{enumerate}
\item If $T_P^2$ is unipotent, then
\[
 \chi(F_P)=1-(-1)^{r}
 =\begin{cases}0,&r\text{ even},\\2,&r\text{ odd}.\end{cases}
\]
Equivalently $\nu_{X_n}(P)=(-1)^{\dim T_PX_n}$.
\item If $T_P^3$ is unipotent, then
\[
 \chi(F_P)=\chi_c\{z\in K:C(z)=1\}=3b_{3}.
\]
Thus $\nu_{X_n}(P)=(-1)^n(1-3b_{3})$.
\item If $T_P^4$ is unipotent, then
\[
 \chi(F_P)=1+(-1)^{r}(4b_{4}-1)
 =\begin{cases}4b_{4},&r\text{ even},\\
                2-4b_{4},&r\text{ odd}.\end{cases}
\]
Thus $\nu_{X_n}(P)=(-1)^{n+r}(1-4b_{4})$.
In particular $\chi(F_P)=0$ if and only if $r$ is even and
$b_{4}=0$.
\end{enumerate}
Without the corresponding unipotence hypothesis, the same right-hand
sides compute $L(T_P^2)$, $L(T_P^3)$, and $L(T_P^4)$ respectively.
\end{theorem}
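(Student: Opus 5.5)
The plan is to compute Lefschetz numbers of iterates of the monodromy through Denef--Loeser's trace formula on arc spaces, and then use unipotence to identify the Lefschetz number with the Euler characteristic. Denef--Loeser show that for $m\ge1$
\[
 L(T_P^m)=\chi\bigl(\mathcal X_{m,1}\bigr),\qquad
 \mathcal X_{m,1}=\{\gamma\in\mathcal L_m(M):\gamma(0)=P,\ f_n(\gamma(t))=t^m \bmod t^{m+1}\}.
\]
If $(T_P^m-I)^q=0$ on each $H^j(F_P,\Q)$, then every eigenvalue of $T_P^m$ equals $1$, so $\Tr(T_P^m|H^j)=\dim H^j$ and $L(T_P^m)=\chi(F_P)$. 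The theorem therefore reduces to computing $\chi(\mathcal X_{m,1})$ for $m=2,3,4$; this computation is exactly the "without unipotence" statement at the end. The computation is local, so I would pick a slice of $M$ at $P$. Near the image of $(A_0,B_0,C_0,v)$, the quotient $M$ is locally modeled on an affine subspace transversal to the $\mathrm{GL}_n$-orbit. On it, $f_n$ has the exact expansion \eqref{eq:exacttrace}, namely $f_n=Q+C$ with no constant or linear term because $P$ is critical. The $\C^n$ factor and the gauge directions lie in $K$ and only contribute affine-space factors, which have $\chi=1$. The fibres of jet truncation are affine spaces, so Euler characteristics can be computed after truncating.

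Write an arc as $\gamma(t)=\sum_{k\ge1}z_kt^k$ and expand $f_n(\gamma)=t^2Q(z_1)+t^3(2B(z_1,z_2)+C(z_1))+t^4(Q(z_2)+2B(z_1,z_3)+(dC)_{z_1}(z_2))+\dots$.

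For $m=2$ the condition is $Q(z_1)=1$, so $\chi(\mathcal X_{2,1})=\chi\{Q=1\}$. For a nondegenerate quadric of rank $r$, times the affine space $K$, this is $1-(-1)^r$. Combined with Behrend's formula $\nu=(-1)^N(1-\chi)$ and $N-r=\dim T_PX_n$, this gives (1).

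For $m=3$ the conditions are $Q(z_1)=0$ and $2B(z_1,z_2)+C(z_1)=1$. I would use the $\C^*$-action $z\mapsto\lambda z$ on $L$-components together with a fibration over $z_1$. When the $L$-part $u_1$ of $z_1$ is nonzero, the equation in $z_2$ is affine-linear with nonzero linear part, so its solution set is an affine space; the remaining locus $\{Q(z_1)=0,\ u_1\ne0\}$ should have vanishing Euler characteristic via the free $\C^*$-scaling, with the compatibility checked carefully. When $u_1=0$, so $z_1\in K$, the equation becomes $C(z_1)=1$ with $z_2$ free. This gives $\chi\{z\in K:C(z)=1\}$. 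The Milnor fibre of the homogeneous cubic $c$ is a $\mu_3$-cover of $\PP(K)\setminus V(c)$, so this equals $3b_3$.

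For $m=4$ I would stratify the same way. The strata with $u_1\ne0$ contribute through the quadric, and I expect these to give the term $1-(-1)^r$ as in the case $m=2$. On $z_1\in K$, the $t^3$ condition forces $c(z_1)=0$. The $t^4$ condition becomes $Q(z_2)+(dC)_{z_1}(z_2)=1$ with $z_2$ mostly free. Completing the square in the $L$-part of $z_2$ via $\mathsf B^{-1}$ turns this into a quadric of rank $r$ shifted by $h(z_1)$, namely $Q(u')=1-h(z_1)$. For $z_1\notin\Sigma$ the linear term can be used to make this set an affine fibration, or else $\C^*$-scaling kills it. On $\Sigma$ the fibre's Euler characteristic depends only on whether $h(z_1)=1$ or not. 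The $\mu_4$-cover $\{h=1\}\to\PP(\Sigma)\setminus V(h)$ then yields $4b_4$, multiplied by the sign $(-1)^r$ coming from the quadric. Assembling these pieces should give $1+(-1)^r(4b_4-1)$.

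The main obstacle is this last step: controlling the $m=4$ strata where $z_1\in K\setminus\Sigma$ or $u_1\neq0$ but small. This requires checking that the jet conditions really decouple, since $z_3$ enters linearly through $B(z_1,z_3)$ only when $u_1\ne0$, and that no hidden contribution arises from $V(c)\setminus\Sigma$. I would handle it with a careful $\C^*$-equivariant stratification and the additivity of $\chi_c$.
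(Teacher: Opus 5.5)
Your reduction to $L(T_P^m)=\chi_c(\mathcal X_{m,P})$ plus unipotence is correct, and so are parts (1) and (2). They follow the paper's route: the matrix lift of Proposition~\ref{prop:matrixlift}, then Theorems~\ref{thm:square} and~\ref{thm:cube}.

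The gap is in part (3). The contributions you assign to the four-jet strata are wrong, and you reach $1+(-1)^r(4b_4-1)$ only because two errors cancel.

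\emph{The strata with $u_1\neq0$ contribute $0$, not $1-(-1)^r$.} Take $z_1\notin K$ with $Q(z_1)=0$. The $t^3$ condition is affine-linear in $z_2$, and the $t^4$ condition is affine-linear in $z_3$. Both have the nonzero linear part $2B(z_1,-)$; you note this yourself at the end. So this piece is an affine-space bundle over
$\{Q=0\}\setminus K\simeq K\times\{u\in L\setminus\{0\}:Q(u)=0\}$.
That base has $\chi_c=0$ by the same $\C^*$-argument you used for $m=3$.

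Likewise, the locus $\{z_1\in K:c(z_1)=0,\ dc_{z_1}\neq0\}$ contributes $0$. The last equation is linear in $w_K$ through $dc_{z_1}$, and the base is $\C^*$-stable and avoids the vertex. So the strata you flag as the main obstacle are the easy ones.

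A quick check: take $f=x^2$. Then $r=1$, $K=0$, $\Sigma=\{0\}$ and $b_4=0$. The $u_1\neq0$ stratum is empty, yet $L(T^4)=2$.

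\emph{All of the answer comes from $\Sigma$.} What is missing is the fibre integration over $\Sigma$. After completing the square, the $\Sigma$-stratum is $\{(v,u)\in\Sigma\times L:Q(u)=1-h(v)\}$ times free affine factors. The fibres come in two types:
\begin{itemize}
\item over $A_4=\{v\in\Sigma:h(v)=1\}$ the fibre is the affine quadric cone, with $\chi_c=1$ (not $(-1)^r$);
\item over $\Sigma\setminus A_4$, which contains the vertex $v=0$, the fibre is a nonzero level set of $Q$, with $\chi_c=1-(-1)^r$.
\end{itemize}
Now use $\chi_c(\Sigma)=1$ (the vertex plus a $\C^*$-bundle) and $\chi_c(A_4)=4b_4$. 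This gives
$4b_4+(1-4b_4)(1-(-1)^r)=1+(-1)^r(4b_4-1)$.

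So the constant $1-(-1)^r$ is the contribution of the cone $\Sigma$, essentially its vertex. The factor $(-1)^r$ arises as the difference $1-(1-(-1)^r)$ between the two fibre types, not as a sign carried by the $\mu_4$-cover. Without this computation your assembly does not prove (3).

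A minor point: $K$, $b_3$ and $b_4$ are defined in the matrix space $E$. It is therefore cleaner to run the jet computation directly on $E$ at $(A_0,B_0,C_0)$, which Proposition~\ref{prop:matrixlift} justifies. There $f=Q+C$ exactly, with no quartic term. Your assertion that gauge directions ``only contribute affine factors'' is then unnecessary, and as stated it would need justification.
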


The proof combines the matrix-lift argument  with the direct
jet computations in Theorem~\ref{thm:square}, Theorem~ \ref{thm:cube}, and
Theorem~ \ref{thm:fourth}. 

\begin{remark}
    The order of the unipotency of the local monodromy $T_P$ for $P\in X_n$ depends on the points $P$.  Thus, it is interesting to ask if there is a bound for the unipotency order for all $P\in X_n$ for a fixed $n$.
\end{remark}

To prove the global nonconstancy of the Behrend function $\nu_{X_n}$, we need to find ideals $P$ in $X_n$ such that 
$T_P^2$, $T_P^3$, or $T_P^4$ is unipotent.
The following ideal
\[
 I=\bigl((x^2)+(y,z)^2\bigr)^2+(y^3-x^3z)
\]
is homogeneous for weights $(1,1,0)$. Its trace potential has weight
$2$, so  $T_P^2=1$ at $[I]$.
The tangent dimension  is $99$, which is  calculated in
\cite[Theorem 6]{JKS},
\[
 N=1176,\qquad r=1176-99=1077.
\]
Theorem~\ref{thm:intromain} therefore yields $\chi(F_{P})=2$ and
$\nu_{X_{24}}([I])=-1$.  \cite[Theorem 6]{JKS} used equivariant symmetric obstruction theory of $(X_{24},P)$ to deduce the above formula. 

\begin{theorem}\label{thm:family}
Let $S=\C[x,y,z]$ and 
$\mathfrak m=(x,y,z)$. We have  the following results of families of ideals: 
\begin{enumerate}
    \item $I_m=((x^3)+(y,z^m)^2)^2+(y^3-x^3z^m)$ has length $36m$, and $T_P^2$ is unipotent;
    \item $B_d=\mathfrak m^{d+1}+(x^d+y^d+z^d)$  has length $\binom{d+3}{3}-1$, and $T_P^3$ is unipotent;
\item $D_s=((x^s)+(y^s,z^s)^2)^2+(y^{3s}-x^sz^s)$ has length  $12s^3$ and  $T_P^4$ is unipotent,
\end{enumerate}
where 
\begin{enumerate}
\item[(i)]
for $P=I_m$,  odd $m$ gives monodromy order two and $\chi(F_P)=2$, and 
even $m$ gives $T_P=1$. In particular, we have
\[
\dim_{\C}S/I_m=36m,
\qquad
\dim T_{[I_m]}X_{36m}=135m,
\]
and
\[
\nu_{H_{36m}}([I_m])=(-1)^m,
\qquad
\chi(F_{[I_m]})=1-(-1)^m.
\]
Every subscheme in this family is supported at
the origin. Thus for odd $m$, the results prove  the global nonconstancy of the Behrend function $\nu_{X_n}$ for $n=36m$.

\item[(ii)] for $P=B_d$, when $d=2$, the length $B_2$ is $9$, we have 
$$
\nu_{X_{9}}(B_2)=2.
$$

\item[(iii)] for $P=D_s$, 
$\chi(F_P)\equiv2\pmod4$ when $s$ is odd, which already implies that $\chi(F_P)$ is not zero.  Thus, the Behrend function $\nu_{X_n}$ is already nonconstant for $n=12s^3$.  In particular,
$$
\nu_{X_{12}}(D_1)=3.
$$
\end{enumerate}
\end{theorem}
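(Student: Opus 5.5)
The plan is to treat the three families separately, and in each case to follow one template. First, compute the colength. Second, find a torus weight (or grading) under which the ideal is homogeneous, and read off the weight of the trace potential; this bounds the order of $T_P$. Third, compute the tangent dimension, or the invariants $b_3$ and $b_4$. Finally, feed the result into Theorem~\ref{thm:intromain}.

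The monodromy claim uses a standard fact. Suppose $P$ is fixed by a one-parameter subgroup $\lambda:\C^*\to T$ acting on $M$, and $f_n$ is quasi-homogeneous of weight $w$, i.e. $f_n(\lambda(t)x)=t^w f_n(x)$. Then the Milnor fibration at $P$ is trivialized by the $\C^*$-action, and the monodromy is $\lambda(e^{2\pi i/w})$. Since this lies in a connected group, it acts trivially on cohomology after one passes to $T_P^w$. So $T_P^w$ is unipotent, indeed trivial.

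For the action of $(t_1,t_2,t_3)$ on $\C^3$, the trace $\Tr(A[B,C])$ has weight $w_x+w_y+w_z$ on the matrix lift. The framing vector can be given weight zero after the standard twist by $\mathrm{GL}_n$.
\begin{itemize}
\item For $I_m$, take weights $(m,m,0)$. Then $y^3$ and $x^3z^m$ both have weight $3m$, so $y^3-x^3z^m$ is homogeneous once $z$ is given weight $1$ and $x,y$ are rescaled appropriately. Choosing weights $(1,1,0)$ after substituting $z^m$ gives total weight $2$ for $m$ odd; for $m$ even one can halve the weights to reach weight $1$, giving $T_P=1$. I expect the precise weight bookkeeping to be mildly fiddly but routine.
\item For $B_d$, take weights $(1,1,1)$, which give weight $3$.
\item For $D_s$, take weights $(s,1,2s)$ or a similar choice making $y^{3s}$ and $x^sz^s$ homogeneous, with total weight congruent to a divisor of $4$.
\end{itemize}

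The colengths come from Hilbert-function counts of monomial-type bases. In the case of $B_d$, the ideal $\mathfrak m^{d+1}$ has colength $\binom{d+3}{3}$, and the extra degree-$d$ generator removes one dimension. For $I_m$ and $D_s$, one uses a Gröbner basis in a suitable term order, which reduces to the case $m=1$ or $s=1$ via the flat substitution $z\mapsto z^m$. That substitution multiplies the colength by $m$ (respectively by $s^3$ under $x,y,z\mapsto x^s,y^s,z^s$).

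For (i), the tangent dimension $\dim\operatorname{Hom}(I_m,S/I_m)=135m$ should follow the same way from the computation in \cite{JKS}. The map $z\mapsto z^m$ is flat, and $\operatorname{Hom}$ is compatible with this base change, which gives linear scaling. Then $r=N-135m$ with $N=2n^2+n$, $n=36m$. Since $N$ is even, $r\equiv m \pmod 2$, and Theorem~\ref{thm:intromain}(1) gives $\chi=1-(-1)^m$ and $\nu=(-1)^m$. This matches the theorem's formula $\nu=(-1)^{\dim T_P}$ because $135m\equiv m \pmod 2$.

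For (ii), with $d=2$ and $n=9$, I would compute $K=\ker B$, which is the tangent space lifted to $E$, and the cubic $c=C|_K$ explicitly. The target is $b_3=\chi_c(\PP(K)\setminus V(c))$, which should be $-1/3\cdot(\ldots)$ so that $(-1)^9(1-3b_3)=2$, i.e. $b_3=1$. This is the main obstacle: $K$ is large, and computing the Euler characteristic of a cubic hypersurface complement needs either a reduction to a small essential cubic (by splitting off the variables on which $c$ is degenerate) or torus localization. I would first try to localize with respect to the residual torus $(\C^*)^3/\C^*$ acting on $K$. That reduces $\chi_c$ of the complement to a count over fixed loci, where $c$ becomes a sum of monomials.

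For (iii), I would use Theorem~\ref{thm:intromain}(3): $\chi=1+(-1)^r(4b_4-1)\equiv 2 \pmod 4$ exactly when $r$ is odd. So it suffices to show $\dim T_{D_s}$ is odd for odd $s$, which again follows from a scaling computation of the tangent space. For $s=1$ and $n=12$, computing $b_4$ via torus localization on $\Sigma$ should give $b_4=1$, hence $\nu=(-1)^{12+r}(1-4)=3$ with $r$ odd.
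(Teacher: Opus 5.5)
Your template matches the paper's: a torus weight bounds the monodromy order, flat base change gives length and tangent dimension, and Theorem~\ref{thm:intromain} finishes. But there are genuine gaps, and the largest is that the explicit values in (ii) and (iii) are never derived. You get $b_3=1$ by solving $(-1)^9(1-3b_3)=2$ backwards, and you only assert that $b_4$ ``should'' equal $1$; nothing in the proposal computes either number. The tool you propose for (ii) also fails. A diagonal weight $(\alpha,\beta,\gamma)$ preserves $x^2+y^2+z^2$ only if $2\alpha=2\beta=2\gamma$, so $B_2$ has no residual $(\C^*)^3/\C^*$ symmetry to localize against. The missing idea is to change coordinates to $B_2'=\mathfrak m^3+(y^2-xz)$. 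Then $B_2'$ and $D_1$ are both fixed by $T_0:(x,y,z)\mapsto(tx,y,t^{-1}z)$, which leaves the trace invariant, so $b_3$ and $b_4$ localize to the $T_0$-fixed chart (dimensions $35$ and $60$, with a seven-dimensional Hessian kernel in both). For $B_2'$ this yields an explicit cubic in five variables, and projecting from a point of $V(P)$ shows $\chi_c(\PP^4\setminus V(P))=1$. For $D_1$ the cubic vanishes on the fixed kernel, so $\Sigma$ is the whole kernel. Because the matrix chart has no quartic term, the entire effective quartic is the correction $-\tfrac14\ell_z(\mathsf B^{-1}\ell_z)$, and its projective complement again has $\chi_c=1$. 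Without these computations you obtain neither $\nu_{X_9}(B_2)=2$ nor $\nu_{X_{12}}(D_1)=3$, only $\chi(F_{D_1})\equiv2\pmod4$.

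Several routine steps are also wrong as written.
\begin{itemize}
\item For even $m$, ``halving'' gives no integral weight of trace weight one: $(1,1,0)/2$ is not integral, and $(m,m,0)/2$ has trace weight $m$. You need an explicit vector such as $\bigl(-\tfrac{m+2}{2},\tfrac{m-2}{2},3\bigr)$, which preserves $y^3-x^3z^m$ and has sum $1$.
\item The weights $(1,1,0)$ give $T_P^2=1$ for every $m$, not just odd $m$. Exact order two for odd $m$ needs A'Campo: $T_P=1$ would force $\chi(F_P)=L(T_P)=0\neq2$.
\item For $D_s$, the weights $(s,1,2s)$ fail when $s>1$, since $x^sz^s$ then has weight $3s^2\neq3s$. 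Use $(2,1,1)$ instead.
\item The base values do not come from \cite{JKS}. The JKS ideal has $(x^2)$ in its first summand and tangent dimension $99$, so it is neither $I_1$ nor a member of this flat family. The paper's source is the seed $J=((u)+(v,w)^2)^2+(v^3-uw)$ of \cite{GGGL}, with $\dim R/J=12$ and $\dim\operatorname{Hom}_R(J,R/J)=45$.
\item This seed is base-changed along $u\mapsto x^3$, $v\mapsto y$, $w\mapsto z^m$ (free of rank $3m$) for $I_m$, and along $u\mapsto x^s$, $v\mapsto y^s$, $w\mapsto z^s$ (rank $s^3$) for $D_s$. That delivers colength and tangent dimension together. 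Your reduction to $m=1$ leaves $I_1$ itself unaccounted for.
\end{itemize}
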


\subsection{Convention}
For a Milnor fiber $F_P$, 
all $H^j(F_P)$  mean ordinary cohomology, including $H^0$.
All Euler characteristics of algebraic jet spaces are compactly
supported Euler characteristics, denoted $\chi_c$. For complex algebraic
varieties, the numerical values of ordinary and compactly supported
Euler characteristic agree.

\subsection{Outline}

In \S \ref{sec_trace_jet} we collect some known facts about the local monodromy and jet-spaces.  \S \ref{sec:local:invariants} proves some local invariants for the Hilberrt scheme $\Hilb^n(\C^3)$. In \S \ref{sec:proof_1.1} we prove Theorem \ref{thm:intromain}.
In \S \ref{sec:torus_action} we find the torus action weights so that the local monodromy of the  ideals is unipotent; and in \S \ref{sec:examples} we provide examples and prove Theorem \ref{thm:family}.  
In the appendix, we collect some interesting facts of local motivic and monodromy zeta functions and relate them to the weak monodromy conjecture.

\subsection*{Acknowledgement}

The nonconstancy of the Behrend function on the Hilbert scheme of points on $\C^3$ is a long time interesting problem.  I would like to thank Professors  Kai Behrend, Jim Bryan, and Richard Thomas for discussions when I was visiting UBC, and when I was in Imperial College. 
Y. J. thanks  Martijn Kool, Andrea Ricolfi, and Promit Kundu for valuable discussions. 

Although the idea to prove the nonconstancy of Behrend function comes from learning Denef-Loeser's motivic zeta function and analytic Milnor fiber, 
we used Chatgpt-5.6-Luna to help  calculating the families of examples of ideals. 
This work is partially supported by NSF DMS-2401484, and the College Black-Babcock Professorship Funds from University of Kansas.

\section{The trace formula}\label{sec_trace_jet}

In this section we recall a result of Denef-Loeser using jet-space to calculate the Euler characteristic of the Milnor fiber. 

Let $f: U\to \C$ be a holomorphic function, and $P\in \Crit(f)\subset f^{-1}(0)$. For $\epsilon>0$ small
enough, one may consider the corresponding closed $\epsilon$-ball $B(P;\epsilon)$ around $P$.
For $\eta> 0$ we denote by $D_{\eta}$ the closed disk of radius $\eta$ around the origin in $\C$.
By Milnor’s local fibration Theorem, there exists $\epsilon_0> 0$ such that, for every
$0<\epsilon<\epsilon_0$, there exists $0 < \eta < \epsilon$ such that the morphism $f$ restricts to a fibration,
called the Milnor fibration,
$$
B(P,\epsilon)\cap f^{-1}(D_{\eta}\setminus \{0\})\to D_{\eta}\setminus \{0\}.
$$
Set-theoretically the Milnor fiber at $P$ is:
$$
F_P =f^{-1}(\eta)\cap B(P,\epsilon).
$$

Let $T_P$ be the local monodromy on $H^*(F_P,\Q)$ and put
\[
 L(T_P^m)=\sum_j(-1)^j\Tr(T_P^m\mid H^j(F_P,\Q)).
\]
For $m\geq1$ define the finite-dimensional algebraic space
\begin{equation}\label{eq:jets}
 \mathcal X_{m,P}(f)=
 \{\gamma:\operatorname{Spec}\C[t]/(t^{m+1})\to M:
 \gamma(0)=P,\quad f(\gamma(t))=t^m\bmod t^{m+1}\}.
\end{equation}
Denef--Loeser's theorem \cite{DL}, \cite[Theorem 1.1.1]{HL_arxiv}, \cite[Theorem 2.3]{Loeser}  gives
\begin{equation}\label{eq:DL}
 L(T_P^m)=\chi_c(\mathcal X_{m,P}(f)).
\end{equation}
In particular, if $\operatorname{mult}_P(f)=\mu$, the jet space is empty
for $0<m<\mu$, so
\begin{equation}\label{eq:AC}
 L(T_P^m)=0\quad(0<m<\mu).
\end{equation}
For $m=1$ this is A'Campo's theorem $L(T)=0$ at a critical point, see \cite{AC}. 
The multiplicity refinement is due to Deligne.

\begin{theorem}\label{thm:jets}
Suppose $T_P^m$ is unipotent on every $H^j(F_P,\Q)$. Then
\begin{equation}\label{eq:mainjet}
 \chi(F_P)=\chi_c(\mathcal X_{m,P}(f)).
\end{equation}
Consequently $\chi(F)=0$ if and only if this jet space has Euler
characteristic zero. In particular, $m<\operatorname{mult}_P(f)$ is a
sufficient condition.
\end{theorem}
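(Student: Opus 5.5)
The plan is to reduce everything to the Denef--Loeser formula \eqref{eq:DL} together with an elementary linear-algebra fact about unipotent operators. Suppose $T_P^m$ is unipotent on each $H^j(F_P,\Q)$. Since $(T_P^m-I)^q=0$, every eigenvalue of $T_P^m$ on $H^j(F_P,\Q)\otimes\C$ equals $1$. Triangularize $T_P^m$ over $\C$, or equivalently pass to its Jordan normal form. Then $\Tr(T_P^m\mid H^j(F_P,\Q))=\dim_\Q H^j(F_P,\Q)$. The Milnor fiber has the homotopy type of a finite CW complex, so these cohomology groups are finite dimensional and vanish for large $j$. Summing with signs therefore gives
\[
L(T_P^m)=\sum_j(-1)^j\dim H^j(F_P,\Q)=\chi(F_P).
\]
Combining this with \eqref{eq:DL} yields \eqref{eq:mainjet} directly.

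The stated consequence, that $\chi(F_P)=0$ exactly when $\chi_c(\mathcal X_{m,P}(f))=0$, is an immediate restatement of \eqref{eq:mainjet}.

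For the sufficient condition we use the multiplicity $\mu=\operatorname{mult}_P(f)$. If $0<m<\mu$, then for any jet $\gamma$ with $\gamma(0)=P$ the composite $f(\gamma(t))$ lies in $(t^{\mu})$. Indeed, every monomial of degree at least $\mu$ in the local coordinates at $P$ is divisible by $t^\mu$, because each coordinate of $\gamma(t)-P$ is divisible by $t$. Hence $f(\gamma(t))\equiv0\bmod t^{m+1}$, since $m+1\le\mu$, and this can never equal $t^m$. So $\mathcal X_{m,P}(f)=\emptyset$ and its compactly supported Euler characteristic is $0$, which is \eqref{eq:AC}. Under the unipotence hypothesis we then get $\chi(F_P)=0$.

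Most of this is formal. The only points needing care are two. First, the unipotence hypothesis is exactly what lets the trace equal the dimension; this needs the generalized $1$-eigenspace to be everything, which holds because a unipotent operator has a nilpotent difference from the identity. Second, the jet-space side must use $\chi_c$ of a constructible set, which is additive, so the empty case is unambiguous. I do not expect a genuine obstacle: the substantive input, the Denef--Loeser trace formula, is assumed from \cite{DL}, \cite{HL_arxiv}, \cite{Loeser}.
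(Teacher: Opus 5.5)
Your proposal is correct and follows the paper's proof: unipotence gives $\Tr(T_P^m\mid H^j(F_P,\Q))=\dim H^j(F_P,\Q)$, so $L(T_P^m)=\chi(F_P)$, and the Denef--Loeser formula \eqref{eq:DL} then gives \eqref{eq:mainjet}, while the case $m<\operatorname{mult}_P(f)$ follows because the jet space is empty. Your explicit check that $f(\gamma(t))\in(t^{\mu})$ spells out the justification of \eqref{eq:AC}, which the paper states without proof.
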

\begin{proof}
A unipotent linear operator has trace equal to the dimension of its
vector space. Therefore $L(T_P^m)=\chi(F_P)$, and~\eqref{eq:DL} proves
the assertion. The last assertion follows from emptiness of the jet
space.
\end{proof}

Local monodromy is quasi-unipotent, so such an $m$ exists; it may be
taken to be a common multiple of the orders of all monodromy
eigenvalues. The theorem is useful when a small $m$ is independently
known. It does not provide a uniform small $m$ at arbitrary Hilbert
points, nor a claim of finite determinacy of the entire singularity.
The monodromy hypothesis is essential to truncating at order $m$.

\begin{corollary}\label{cor:unipotent}
At a critical point, unipotent local monodromy on all cohomology groups
implies $\chi(F_P)=0$. Hence on the trace chart of $X_n$ it implies
$\nu_{X_n}(P)=(-1)^n$.
\end{corollary}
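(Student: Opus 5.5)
The plan is to apply Theorem~\ref{thm:jets} with $m=1$ and then A'Campo's theorem \eqref{eq:AC}. Suppose $T_P$ itself is unipotent on every $H^j(F_P,\Q)$. This is the hypothesis \eqref{eq:nilpotence} with $m=1$.

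\emph{Step 1: the Euler characteristic equals the Lefschetz number.} A unipotent operator has all eigenvalues equal to $1$, so its trace equals the dimension of the space. Hence $\Tr(T_P\mid H^j(F_P,\Q))=\dim H^j(F_P,\Q)$ for each $j$, and summing with signs gives $L(T_P)=\chi(F_P)$. This is exactly the first step of the proof of Theorem~\ref{thm:jets}.

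\emph{Step 2: the Lefschetz number vanishes.} Since $P\in\Crit(f)\subset f^{-1}(0)$, we have $f(P)=0$ and $df_P=0$, so $\operatorname{mult}_P(f)=\mu\ge 2>1$. Thus \eqref{eq:AC} gives $L(T_P)=0$. Concretely, the jet space $\mathcal X_{1,P}(f)$ is empty: a $1$-jet $\gamma(t)=P+tv$ satisfies $f(\gamma(t))\equiv f(P)+t\,df_P(v)=0 \bmod t^2$, which can never equal $t$. Then \eqref{eq:DL} gives $L(T_P)=\chi_c(\emptyset)=0$. Combining with Step 1 yields $\chi(F_P)=0$. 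The only points to check here are the convention that $H^0$ is included, so that $\chi$ is the full alternating sum, and the normalization of \eqref{eq:DL}; both are fixed in the Convention subsection.

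\emph{Step 3: translate to the Behrend function.} On the trace chart, Behrend's formula gives $\nu_{X_n}(P)=(-1)^N(1-\chi(F_P))=(-1)^N$, with $N=2n^2+n$. Since $2n^2$ is even, $(-1)^N=(-1)^n$, which is the claimed value.

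No step is a real obstacle. The only subtlety is that the parity reduction $(-1)^{2n^2+n}=(-1)^n$ uses the specific dimension $N$ of $M$ from the trace chart, rather than the local embedding dimension.
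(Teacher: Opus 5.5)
Your proof is correct and follows essentially the paper's argument: take $m=1$ in Theorem~\ref{thm:jets}, observe that $df(P)=0$ makes $\mathcal X_{1,P}(f)$ empty (equivalently, A'Campo's $L(T_P)=0$), and conclude $\chi(F_P)=0$. Your explicit parity check $(-1)^{2n^2+n}=(-1)^n$ in Behrend's formula only spells out the Behrend-function step that the paper leaves implicit.
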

\begin{proof}
Take $m=1$ and use $df(P)=0$, which makes $\mathcal X_{1,P}(f)$ empty.
Equivalently, $\chi(F_P)=L(T_P)=0$ by A'Campo's theorem.
\end{proof}

Unipotence is sufficient, not necessary. For instance,
$f=(xy)^m$ with $m>1$ has a Milnor fibre consisting of $m$ annuli.
Monodromy permutes these components and is not unipotent, but the
Euler characteristic is zero.

\section{The local invariants on $X_n$}\label{sec:local:invariants}
\subsection{A result for $X_n$}
Recall that $X_n=\Crit(f_n)$, where $f_n: M\to \C$ is given by $\mbox{tr}(a[b,c])$, and $M$ admits a lift $\widetilde{M}\subset E\oplus \C^n$ for 
$E=\operatorname{End}(\C^n)^3$.  Thus, $\widetilde M\subset E\oplus\C^n$ is the open set of cyclic commuting 
quadruples. Its map 
$$\widetilde M\to M$$
to $M$ is a smooth principal
$\operatorname{GL}_n$-bundle, of relative dimension $n^2$.

\begin{proposition}\label{prop:matrixlift}
The cohomology of the Milnor fiber $F_P$ and its monodromy at $P\in M$ agree with those
of the trace polynomial on $E$ at $(A_0,B_0,C_0)$. Its Hessian rank is
$r=N-\dim T_P X_n$.
\end{proposition}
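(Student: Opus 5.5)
We compare the trace function on $M$ with the trace polynomial on $E$ through two smooth projections, using that a Milnor fibre and its monodromy are unchanged under pullback along a smooth morphism (locally analytically a product with a ball).

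\textbf{Step 1: from $M$ to $\widetilde M$.} The quotient $\pi:\widetilde M\to M$ is a smooth principal $\mathrm{GL}_n$-bundle. Choose a local analytic section through a lift $\tilde P=(A_0,B_0,C_0,v_0)$ of $P$. This gives a local biholomorphism $\widetilde M\simeq M\times \mathrm{GL}_n$ near $\tilde P$. Under it, $\tilde f:=\Tr(A[B,C])$ becomes $f_n\circ \mathrm{pr}_1$, because the trace is conjugation invariant. For a function of the form $g(x,y)=f(x)$ on $U\times D^k$, the Milnor fibre at $(P,y_0)$ is homotopy equivalent to $F_P\times D^k$. The monodromy is $T_P\times\mathrm{id}$, since Milnor fibres may be computed with polydiscs in place of round balls. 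Hence $H^*(F_P)$ and $T_P$ agree with those of $\tilde f$ at $\tilde P$.

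\textbf{Step 2: from $\widetilde M$ to $E$.} The set $\widetilde M$ is open in $E\oplus\C^n$, and $\tilde f$ does not depend on the vector coordinate. So near $\tilde P$, $\tilde f$ is the pullback of $g=\Tr(a[b,c])$ on $E$ under the projection $E\oplus\C^n\to E$, restricted to an open set. The same product argument identifies the Milnor fibre and monodromy of $\tilde f$ at $\tilde P$ with those of $g$ at $(A_0,B_0,C_0)$. Here we only use that the projection is locally a trivial product near $\tilde P$; cyclicity is an open condition, so it does not interfere.

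\textbf{Step 3: the Hessian rank.} The quadratic part of $g$ at $z_0=(A_0,B_0,C_0)$ is $Q$ from \eqref{eq:exacttrace}. Let $r_E=\operatorname{rank}B$, which is the rank of $\mathrm{Hess}(g)$. Each of the two product steps adds only zero directions to the Hessian, so $\operatorname{rank}\mathrm{Hess}_P(f_n)=\operatorname{rank}\mathrm{Hess}_{\tilde P}(\tilde f)=r_E$.

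On the smooth $N$-dimensional $M$, the Zariski tangent space of $X_n=\Crit(f_n)$ at $P$ is the kernel of the Hessian. This holds because $T_P\Crit(f)=\ker(d(df))_P$, as the ideal of $\Crit(f)$ is generated by the partials of $f$. It follows that $\operatorname{rank}\mathrm{Hess}_P(f_n)=N-\dim T_PX_n$, which gives $r=N-\dim T_PX_n$.

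\textbf{Main obstacle.} The delicate point is rigor in Steps 1 and 2. One must check that the Milnor fibre of a pullback under a smooth morphism is determined up to homotopy, compatibly with monodromy. Two subtleties arise: the ball $B(P,\epsilon)$ is not a product, and the critical locus of $g$ on $E$ is not isolated. I would handle this with the standard independence of the Milnor fibration from the choice of a good neighbourhood system, which allows polydiscs. Alternatively, one can argue at the level of nearby cycles: $\psi_{f\circ p}=p^*\psi_f$ for smooth $p$, compatibly with monodromy, and taking stalks gives the claim.

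Note also that $\ker B\subset E$ and $T_PX_n$ differ in dimension by the directions of the $\mathfrak{gl}_n$-orbit and the vector coordinate. This is why the statement uses the rank, which is invariant, rather than the kernel.
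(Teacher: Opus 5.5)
Your proposal is correct and follows the paper's argument. The paper also treats the principal $\mathrm{GL}_n$-bundle as locally a product with a smooth germ, deletes the free vector variables, and obtains the rank from $T_P\Crit(f)=\ker\mathrm{Hess}_P(f)$.

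The only difference is where the rank is computed. You compute it on $M$ and carry it through the product steps. The paper computes it on $\widetilde M$ as $(3n^2+n)-(\dim T_PX_n+n^2)$, using that the critical scheme upstairs is the pullback of $X_n$. The paper also checks that the constant and linear terms vanish by commutativity; you get this from criticality of the pullback instead.
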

\begin{proof}
Analytically near the chosen point $P$, $\widetilde M$ is a product with a smooth
$n^2$-dimensional germ, and its potential is the pullback of $f_n$.
Adding such smooth variables leaves the ordinary Milnor cohomology
and monodromy unchanged. On $\widetilde M$ the trace function is
independent of the vector $v$; deleting those $n$ free local variables
therefore leaves the same invariant on $E$.

Pairwise commutativity and cyclic invariance of trace make the
constant and linear Taylor terms vanish. Direct expansion gives
\eqref{eq:exacttrace}. The critical scheme upstairs is the pullback
of $X_n$, so its tangent dimension is $\dim T_PX_n+n^2$.
Its Hessian rank is consequently
\[
 (3n^2+n)-(\dim T_PX_n+n^2)=N-\dim T_PX_n.
\]
Deleting the free vector variables does not change the rank.
\end{proof}

\subsection{The formula for every positive integer \texorpdfstring{$m$}{m}}
For $m\ge2$, introduce $m-1$ matrix triples
$z_i=(a_i,b_i,c_i)\in E$ and, for $2\le\ell\le m$, set
\begin{align}
 P_{\ell,P}(z_1,\ldots,z_{m-1})
 ={}&\sum_{\substack{i,j\ge1\\i+j=\ell}}
 \Tr\bigl(A_0[b_i,c_j]+a_i[B_0,c_j]+a_i[b_j,C_0]\bigr)
 \nonumber\\
 &+\sum_{\substack{i,j,k\ge1\\i+j+k=\ell}}
 \Tr\bigl(a_i[b_j,c_k]\bigr).
 \label{eq:allmatrixcoeff}
\end{align}
Only indices at most $m-1$ occur. Define the affine algebraic locus
\begin{equation}\label{eq:Ymatrix}
 Y_{m,P}=\{P_{2,P}=\cdots=P_{m-1,P}=0,\ P_{m,P}=1\}
 \subset E^{m-1}.
\end{equation}
For $m=2$ the list of zero equations is empty; put $Y_{1,P}=\varnothing$.

\begin{theorem}\label{thm:allmatrix}
For every positive integer $m$,
\begin{equation}\label{eq:allL}
  L(T_P^m)=\chi_c(Y_{m,P}).
\end{equation}
If $(T_P^m-I)^q=0$ on every cohomology group, then
\begin{equation}\label{eq:allchi}
  \chi(F_P)=\chi_c(Y_{m,P}),\qquad
 \nu_{X_n}(P)=(-1)^n(1-\chi_c(Y_{m,P})).
\end{equation}
\end{theorem}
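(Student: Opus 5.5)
Combine Proposition~\ref{prop:matrixlift} with the Denef--Loeser formula \eqref{eq:DL}, then compute the jet space of the trace polynomial on $E$ explicitly. By Proposition~\ref{prop:matrixlift}, $L(T_P^m)$ for $f_n$ at $P$ equals $L(T^m)$ for $g(z)=\Tr(a[b,c])$ on $E$ at $z_0=(A_0,B_0,C_0)$. The key fact is that adding a smooth factor (the $n^2$ gauge directions and the $n$ vector coordinates) does not change the Milnor fibre up to homotopy, nor its monodromy. Applying \eqref{eq:DL} to $g$ on the smooth space $E$ gives
\[
 L(T_P^m)=\chi_c\bigl(\mathcal X_{m,z_0}(g)\bigr).
\]

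Next I identify $\mathcal X_{m,z_0}(g)$. An $m$-jet based at $z_0$ is
\[
 \gamma(t)=z_0+\sum_{i=1}^{m} z_i t^i \bmod t^{m+1}, \qquad z_i=(a_i,b_i,c_i)\in E,
\]
so the jet space is the affine space $E^m$ cut out by conditions on $g(\gamma(t))$. Substituting into \eqref{eq:exacttrace} with $a=\sum a_it^i$ etc.\ gives the following:
\begin{itemize}
 \item[(1)] The constant term of $g(\gamma)$ is $g(z_0)=0$, by commutativity of $A_0,B_0,C_0$.
 \item[(2)] The linear part of $g$ at $z_0$ vanishes, as already noted in the proof of Proposition~\ref{prop:matrixlift}. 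Hence the coefficient of $t^1$ vanishes identically, and $z_m$ enters no coefficient up to order $m$. Indeed, $z_m$ could only appear through the linear part paired with $t^0$ terms, and all quadratic and cubic terms involving $z_m$ have degree $>m$.
 \item[(3)] The coefficient of $t^\ell$ for $2\le\ell\le m$ is exactly $P_{\ell,P}(z_1,\dots,z_{m-1})$ from \eqref{eq:allmatrixcoeff}. It collects the $Q$-part (bilinear, indices $i+j=\ell$) and the $C$-part (trilinear, indices $i+j+k=\ell$).
\end{itemize}
Therefore $\mathcal X_{m,z_0}(g)\cong Y_{m,P}\times E$, where the free factor $E$ is the coordinate $z_m$.

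This last factor is the one point needing care. Since $\chi_c(E)=\chi_c(\C^{3n^2})=1$, multiplicativity of $\chi_c$ gives $\chi_c(\mathcal X_{m,z_0}(g))=\chi_c(Y_{m,P})$, which is \eqref{eq:allL}. The edge cases are as follows.
\begin{itemize}
 \item For $m=1$ the jet condition requires the $t$-coefficient to equal $1$, but it is $0$. The jet space is empty, consistent with $Y_{1,P}=\varnothing$ and A'Campo.
 \item For $m=2$ there are no zero equations, and $Y_{2,P}=\{Q(z_1)=1\}$.
\end{itemize}

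The unipotent statement \eqref{eq:allchi} then follows exactly as in Theorem~\ref{thm:jets}. If $(T_P^m-I)^q=0$, then $\Tr(T_P^m\mid H^j)=\dim H^j$, so $L(T_P^m)=\chi(F_P)$. Behrend's formula gives $\nu_{X_n}(P)=(-1)^N(1-\chi(F_P))$, and $N=2n^2+n\equiv n \pmod 2$, so $\nu_{X_n}(P)=(-1)^n(1-\chi_c(Y_{m,P}))$.

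The main obstacle is the first step: justifying rigorously that the Milnor cohomology with monodromy is unchanged under the smooth principal bundle $\widetilde M\to M$ and under the extra free vector variables. The point is that $\widetilde M$ is open in $E\oplus\C^n$ and the trace function there does not depend on $v$, so locally it is a product. Everything after that is bookkeeping of Taylor coefficients.
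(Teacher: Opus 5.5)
Your proof is correct and follows the paper's route. You use Proposition~\ref{prop:matrixlift} to pass to the trace polynomial on $E$, note that the vanishing constant and linear Taylor terms make the $t^\ell$-coefficients equal to $P_{\ell,P}$ and leave $z_m$ free, so the contact locus is $Y_{m,P}\times E$, then apply Denef--Loeser, unipotence, and Behrend's formula with $N\equiv n\pmod 2$. The only difference is cosmetic: the paper also notes that jets on the cyclic quadruple space add a free factor $\A^{nm}$, which you avoid by working on $E$ directly.
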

\begin{proof}
Write a based matrix jet as
\[
 A(t)=A_0+\sum_{i=1}^m a_i t^i,
 \quad B(t)=B_0+\sum_{i=1}^m b_i t^i,
 \quad C(t)=C_0+\sum_{i=1}^m c_i t^i.
\]
The coefficient of $t^\ell$ in $\Tr(A(t)[B(t),C(t)])$ is
$P_{\ell,P}$. Terms containing a single positive-index matrix vanish
by commutativity of the base triple. Therefore the coefficient of
$t^m$ does not involve $a_m,b_m,c_m$.
The order-$m$ contact jet space on $E$ is exactly
\[
 Y_{m,P}\times E.
\]
Its Euler characteristic is $\chi_c(Y_{m,P})$. If working instead
on the cyclic quadruple space, vector jets contribute the additional
free factor $\A^{nm}$. Based jets remain in the cyclic open set
because their center is cyclic and  they need not satisfy commuting
matrix equations. The contact condition is the scalar trace condition.

Apply the Denef--Loeser trace formula and
Proposition~\ref{prop:matrixlift}. Under the nilpotence condition,
$$\Tr(T_P^m|H^j)=\dim H^j$$ 
giving \eqref{eq:allchi}.
The case $m=1$ is the empty first-contact locus at a critical point.
\end{proof}

\section{The proof of Theorem \ref{thm:intromain}}\label{sec:proof_1.1}

Write $X_n=\Hilb^n(\C^3)$. Its trace-potential presentation is
\[
 X_n=\Crit(f_n)\subset M,\qquad
 f_n(A,B,C,v)=\Tr(A[B,C]),\qquad N=\dim M=2n^2+n.
\]
Here $M$ is the smooth quotient of cyclic quadruples in
$\operatorname{End}(\C^n)^3\oplus\C^n$ by $\operatorname{GL}_n$.
For $P\in X_n$, let $F_{P}$ denote the local Milnor fiber. Behrend's formula gives
\begin{equation}\label{eq:behrend}
 \nu_{X_n}(P)=(-1)^n(1-\chi(F_{P})).
\end{equation}
Thus the expected value $(-1)^n$ is equivalent to $\chi(F_{P})=0$.

\subsection{The square-monodromy criterion}

Choose local coordinates centred at $P$ and expand
\[
 f_n(z)=Q(z)+C(z)+\text{terms of order at least }4,
\]
where $Q$ and $C$ are homogeneous of degrees $2$ and $3$ respectively.
Let $r$ be the rank of $Q$, which is  the Hessian rank.

\begin{theorem}\label{thm:square}
For any critical germ $(M,P)$, $L(T_P^2)=1-(-1)^{r}$. If $T_P^2$ is unipotent on $H^\bullet(F_P,\Q)$, then
\begin{equation}\label{eq:square}
 \chi(F_P)=
 \begin{cases}
 0,&r\text{ even},\\
 2,&r\text{ odd}.
 \end{cases}
\end{equation}
For the critical scheme $X_n=\Crit(f_n)$ this gives
\begin{equation}\label{eq:parity}
 \nu_{X_n}(P)=(-1)^{\dim_{\C}T_PX_n}.
\end{equation}
\end{theorem}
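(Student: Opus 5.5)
Apply the Denef--Loeser trace formula \eqref{eq:DL} with $m=2$. This reduces everything to computing $\chi_c(\mathcal X_{2,P}(f))$.

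Write a based $2$-jet as $\gamma(t)=P+z_1t+z_2t^2$. Since $P$ is a critical point with $f(P)=0$, we have
\[
f(\gamma(t))\equiv Q(z_1)\,t^2 \pmod{t^3}.
\]
The $t^1$ coefficient vanishes because $df(P)=0$. The $t^2$ coefficient is $Q(z_1)+df_P(z_2)=Q(z_1)$, and the variable $z_2$ does not appear. Hence
\[
\mathcal X_{2,P}(f)\cong\{z_1\in\C^N: Q(z_1)=1\}\times\C^N .
\]
For the trace chart, this is exactly $Y_{2,P}\times E$ in Theorem~\ref{thm:allmatrix}, with $P_{2,P}=Q$. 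Since $\chi_c(\C^N)=1$, we get $L(T_P^2)=\chi_c\{Q=1\}$.

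Next, diagonalize $Q$ in suitable linear coordinates as $Q=x_1^2+\dots+x_r^2$. The kernel directions are free, so
\[
\{Q=1\}\cong V_r\times\C^{N-r},\qquad V_r=\{x_1^2+\dots+x_r^2=1\}\subset\C^r .
\]
It remains to show $\chi_c(V_r)=1-(-1)^r$. I would prove this by induction, using cut-and-paste additivity of $\chi_c$.

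Let $W_r=\{x_1^2+\dots+x_r^2=0\}$ and $\chi_c(\C^r)=1$. The $\C^*$-action $x\mapsto\lambda x$ shows that $\C^r\setminus W_r\to\C^*$, $x\mapsto Q(x)$, is a fibration with fibre $V_r$. Its total space therefore has $\chi_c=\chi_c(V_r)\chi_c(\C^*)=0$, so $\chi_c(W_r)=1$ for $r\ge1$.

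For the induction, decompose $V_r$ according to whether $x_r^2=1$ or not:
\[
V_r=\bigl(\{x_r=\pm1\}\times W_{r-1}\bigr)\ \sqcup\ \bigl(\{x_r^2\ne1\}\times_{\text{fibred}} V_{r-1}\text{-fibres}\bigr).
\]
On the second piece, the fibre over each $x_r$ with $x_r^2\neq1$ is a rescaled copy of $V_{r-1}$. The base $\C\setminus\{\pm1\}$ has $\chi_c=-1$. This gives
\[
\chi_c(V_r)=2\chi_c(W_{r-1})-\chi_c(V_{r-1}),
\]
with the conventions $\chi_c(W_0)=1$ (a point) and $V_0=\varnothing$. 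Starting from $V_1=\{\pm1\}$ with $\chi_c=2$, the recursion yields $\chi_c(V_r)=2-\chi_c(V_{r-1})$, hence $\chi_c(V_r)=1-(-1)^r$.

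Alternatively, one can cite directly that $V_r$ is homotopic to $S^{r-1}$, which gives $\chi=1+(-1)^{r-1}$.

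With this, $L(T_P^2)=1-(-1)^r$ holds without any hypothesis. Under the unipotence hypothesis, Theorem~\ref{thm:jets} gives $\chi(F_P)=L(T_P^2)$, which is \eqref{eq:square}.

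For \eqref{eq:parity}, insert this value into Behrend's formula \eqref{eq:behrend}:
\[
\nu_{X_n}(P)=(-1)^n(-1)^r=(-1)^{n+N-\dim T_PX_n}.
\]
By Proposition~\ref{prop:matrixlift}, $r=N-\dim T_PX_n$. Since $N=2n^2+n\equiv n\pmod 2$, this equals $(-1)^{\dim T_PX_n}$.

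The main point needing care is the identification of the $2$-jet locus with $\{Q=1\}\times\C^N$, specifically the vanishing of the linear term, which follows from criticality. The Euler characteristic computation is routine; I would present it via the Milnor-fibre homotopy $V_r\simeq S^{r-1}$ to keep it short.
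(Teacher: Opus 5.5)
Your proposal is correct and follows the paper's proof essentially step for step: Denef--Loeser with $m=2$, the identification $\mathcal X_{2,P}(f)\cong\{Q=1\}\times\A^N$ from $df(P)=0$, diagonalization to the affine quadric $V_r$, and then Behrend's formula with $r=N-\dim T_PX_n$ and $N\equiv n\pmod 2$. Your cut-and-paste recursion $\chi_c(V_r)=2-\chi_c(V_{r-1})$ is a valid purely algebraic substitute for the paper's retraction of $V_r$ onto $\mathbb{S}^{r-1}$; since you say you would present the retraction anyway, the two arguments coincide.
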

\begin{proof}
A two-jet has the form $\gamma(t)=t v_1+t^2v_2$. Since $df(P)=0$,
\[
 f_n(\gamma(t))=t^2Q(v_1)\bmod t^3.
\]
Thus $\mathcal X_{2,P}(f_n)\simeq\{Q=1\}\times\A^N$, and
Theorem~\ref{thm:jets} gives $\chi(F_P)=\chi_c(\{Q=1\})$.
For $r=0$ this space is empty. For $r>0$, diagonalize $Q$; its level
set is an affine factor times the smooth affine quadric
$z_1^2+\cdots+z_{r}^2=1$. The latter retracts onto $\mathbb{S}^{r-1}$ and has
Euler characteristic $1+(-1)^{r-1}$. This proves~\eqref{eq:square}.

The linearization of $df=0$ at $P$ is the Hessian map, so
$\dim T_PX_n=N-r$. Therefore
\[
 \nu_{X_n}(P)=(-1)^N(1-\chi(F_P))
 =(-1)^{N+r}=(-1)^{N-r}.
\]
\end{proof}

\begin{corollary}\label{cor:hilbsquare}
If the local monodromy at $P\in X_n$ satisfies that $T_P^2$
is unipotent, then
\[
 \chi(F_{P})=0
 \quad\Longleftrightarrow\quad
 \dim T_P X_n\equiv n\pmod2.
\]
\end{corollary}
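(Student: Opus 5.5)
The corollary follows by combining Theorem~\ref{thm:square} with Behrend's formula \eqref{eq:behrend}. The plan is a parity bookkeeping exercise.

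First, under the hypothesis that $T_P^2$ is unipotent, Theorem~\ref{thm:square} gives $\chi(F_P)=0$ if $r$ is even and $\chi(F_P)=2$ if $r$ is odd. Here $r$ is the Hessian rank of $f_n$ at $P$. So $\chi(F_P)=0$ holds exactly when $r$ is even. By Proposition~\ref{prop:matrixlift}, or directly from the linearization of $df_n=0$ as in the proof of Theorem~\ref{thm:square}, we have $r=N-\dim T_PX_n$ with $N=2n^2+n$.

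Second, we reduce the parity of $r$. Since $2n^2$ is even, $N\equiv n\pmod 2$. Hence $r\equiv n-\dim T_PX_n\pmod 2$. Thus $r$ is even if and only if $\dim T_PX_n\equiv n\pmod 2$, which is the claimed equivalence.

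As a consistency check, we compare with \eqref{eq:parity}. That formula gives $\nu_{X_n}(P)=(-1)^{\dim T_PX_n}$. On the other hand, \eqref{eq:behrend} shows that $\chi(F_P)=0$ if and only if $\nu_{X_n}(P)=(-1)^n$. These two statements agree with the parity computation above. There is no real obstacle here; the only point needing care is that $N$ and $n$ have the same parity, which removes the dependence on $N$.
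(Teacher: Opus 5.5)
Your proposal is correct and is essentially the argument the paper intends: the paper states this corollary immediately after Theorem~\ref{thm:square} without a separate proof, and it follows from that theorem's dichotomy ($\chi(F_P)=0$ iff $r$ is even) together with $r=N-\dim T_PX_n$ and $N=2n^2+n\equiv n\pmod 2$. Your consistency check against \eqref{eq:parity} and \eqref{eq:behrend} is also sound.
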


This implication is not asserted without the monodromy assumption.
In general, tangent-space dimension does not determine the Behrend
function. For example, $f=z^3$ and $f=z^4$ both give one-dimensional
Zariski tangent spaces at the origin of their critical schemes, but
the Behrend values are $2$ and $3$ respectively.

\subsection{The cubic-monodromy criterion}

Let $B$ be the symmetric bilinear form with $Q(v)=B(v,v)$, and put
\[
 K=\ker B,\qquad c=C|_{K}.
\]
For a critical scheme, $K=T_P\Crit(f)$. The cubic $c$ is well-defined
up to the induced linear change of coordinates on $K$: the change in
the cubic term from a quadratic coordinate substitution is of the form
$2B(v,h_2(v))$, which vanishes on $K$.

\begin{theorem}\label{thm:cube}
For any critical germ $(M,x)$, $L(T_P^3)=3\chi_c(\PP(K)\setminus V(C))$. If $T_P^3$ is unipotent on all cohomology groups, then
\begin{equation}\label{eq:cube}
 \chi(F_P)=\chi_c(\{v\in K:c(v)=1\})
 =3\chi_c\bigl(\PP(K)\setminus V(c)\bigr).
\end{equation}
In particular, $\chi(F_P)=0$ if and only if the indicated projective
cubic complement has Euler characteristic zero. If $c=0$, the
complement and the affine level set are empty, so vanishing follows.
\end{theorem}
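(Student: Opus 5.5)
We compute the order-three contact locus $\mathcal X_{3,P}(f)$ directly and then apply the Denef--Loeser formula \eqref{eq:DL} together with Theorem~\ref{thm:jets}.

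\textbf{Step 1: the defining equations.} Work in local coordinates at $P$ with $f=Q+C+O(4)$ and $df(P)=0$. Write a three-jet as $\gamma(t)=tv_1+t^2v_2+t^3v_3$. Then
\[
 f(\gamma(t))=t^2Q(v_1)+t^3\bigl(2B(v_1,v_2)+C(v_1)\bigr)\bmod t^4 .
\]
So $\mathcal X_{3,P}(f)$ is cut out by two conditions: $Q(v_1)=0$ and $2B(v_1,v_2)+C(v_1)=1$. The variable $v_3$ is free and contributes an affine factor $\A^N$, which does not change $\chi_c$.

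\textbf{Step 2: fibre over the first coefficient.} Project to $v_1\in\{Q=0\}$ and stratify by whether $v_1\in K$.
\begin{itemize}
\item If $v_1\notin K$, the linear form $B(v_1,-)$ is nonzero. The condition on $v_2$ is then an affine hyperplane, isomorphic to $\A^{N-1}$, so each fibre has $\chi_c=1$. This stratum contributes $\chi_c(\{Q=0\}\setminus K)$.
\item If $v_1\in K$, the condition reduces to $c(v_1)=1$ with $v_2$ free. This stratum contributes $\chi_c(\{v\in K:c(v)=1\})$.
\end{itemize}
To see that the first contribution vanishes, use the free $\C^*$-action $v\mapsto\lambda v$ on the cone $\{Q=0\}\setminus K$, which contains no origin. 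A variety with a free $\C^*$-action has $\chi_c=0$; equivalently, the cone fibres over its projectivization with fibres $\C^*$. Hence
\[
 \chi_c(\mathcal X_{3,P})=\chi_c(\{v\in K:c(v)=1\}).
\]
This gives the first equality of \eqref{eq:cube} and identifies $L(T_P^3)$ via \eqref{eq:DL}.

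\textbf{Step 3: the factor $3$.} The map $\{c=1\}\to\PP(K)\setminus V(c)$ sending $v\mapsto[v]$ is surjective. Its fibre over $[v]$ is $\{\lambda v:\lambda^3=1/c(v)\}$, which consists of three points. The map is a finite étale cover of degree $3$ (a $\mu_3$-quotient), so $\chi_c$ multiplies by $3$. If $c=0$, both spaces are empty and everything vanishes. Under the unipotence hypothesis, Theorem~\ref{thm:jets} converts $L(T_P^3)$ into $\chi(F_P)$.

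\textbf{Main obstacle.} The delicate point is the coordinate dependence of Step~1. The cubic term $C$ of $f$ depends on the chosen coordinates. However, on the $v_1\in K$ stratum only $c=C|_K$ enters, and the paper's remark shows that $c$ is well defined. Consequently the jet space is intrinsic, as it must be.

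I would also double-check the fibration argument on the $v_1\notin K$ stratum, namely that the hyperplane fibres vary algebraically. This holds because the stratum is the complement of the closed degeneracy locus $K$ of the linear form $B(v_1,-)$, so the fibration is Zariski locally trivial there and $\chi_c$ is multiplicative.
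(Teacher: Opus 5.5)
Your proposal is correct and follows the paper's proof step for step: the same three-jet expansion, the same split of $\{Q=0\}$ into $K$ and its complement, and the same degree-$3$ étale projectivization $\{c=1\}\to\PP(K)\setminus V(c)$. The only cosmetic difference is in showing that the off-$K$ stratum contributes nothing. You apply the free scaling action to the cone $\{Q=0\}\setminus K$ directly, whereas the paper first writes this stratum as $K\times\{\ell\in L\setminus\{0\}:Q(\ell)=0\}$ for a complement $L$; both arguments reduce to $\chi_c(\C^*)=0$.
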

\begin{proof}
For $\gamma(t)=t v_1+t^2v_2+t^3v_3$,
\[
 f_n(\gamma(t))=t^2Q(v_1)
 +t^3\bigl(2B(v_1,v_2)+C(v_1)\bigr)\bmod t^4.
\]
The jet equations are
\[
 Q(v_1)=0,\qquad2B(v_1,v_2)+C(v_1)=1,
\]
with $v_3$ arbitrary. Split the first equation into $v_1\in K$ and
$v_1\notin K$.

On $K$, the equations reduce to $c(v_1)=1$, and both $v_2$ and $v_3$
are free. Off $K$, the second equation is a nonzero linear equation
in $v_2$, so its fibres are affine spaces of dimension $N-1$.
Their contribution to Euler characteristic is
$\chi_c(\{Q=0\}\setminus K)$. Choose a linear complement $L$ of
$K$, on which $Q$ is nondegenerate. Then
\[
 \{Q=0\}\setminus K\simeq
 K\times\{\ell\in L\setminus\{0\}:Q(\ell)=0\}.
\]
The second factor is the complement of the zero section in the
tautological line bundle over a projective quadric. It has Euler
characteristic zero because $\chi_c(\C^*)=0$; empty cases give the
same conclusion. Only the contribution from $K$ survives. Apply
Theorem~\ref{thm:jets} with $m=3$.

Finally, projectivization $\{c=1\}\to\PP(K)\setminus V(c)$ is a
finite \emph{\'etale} cover of degree $3$. Euler characteristic
multiplies by its degree. If $c=0$ both spaces are empty.
\end{proof}

\subsection{Fourth powers: a corrected quartic on the cubic critical cone}

Write the Taylor expansion in coordinates on $V=T_PM$ as
\[
 f_n=Q+C+D_x+O(5),
\]
with homogeneous terms of degrees $2,3,4$. Let $Q(v)=B(v,v)$,
$K=\ker B$, and choose a complement $L$ so that $V=L\oplus K$.
Put $r=\dim L$, $C=C|_{K}$, and let
\[
 \mathsf B:L\longrightarrow L^*,\qquad \mathsf B(u)=B(u,-)
\]
be the isomorphism defined by the nondegenerate quadratic form.
For $v\in K$ set
\[
 a(v)=d(C)_v|_{L} \text{~in~} L^*,\qquad
 h(v)=D_x(v)-\frac14 a(v)\bigl(\mathsf B^{-1}a(v)\bigr).
\]
Here $a$ is quadratic in $v$, so $h$ is homogeneous quartic. If $r=0$,
the correction is zero. Define the reduced cubic critical cone and
its projective quartic complement by
\[
 \Sigma=\{v\in K:dc_v=0\},\qquad
 B_4=\PP(\Sigma)\setminus V(h),\qquad b_4=\chi_c(B_4).
\]
We mean $\PP(\Sigma)=(\Sigma\setminus\{0\})/\C^*$.
In particular $\PP(\{0\})$ is empty. Only underlying reduced spaces
enter these Euler characteristics.

\begin{theorem}\label{thm:fourth}
For every critical germ, independently of a monodromy hypothesis,
\begin{equation}\label{eq:L4}
 L(T_P^4)=1+(-1)^{r}(4b_4-1).
\end{equation}
If $T_P^4$ is unipotent on every cohomology group, then
\begin{equation}\label{eq:fourth}
 \chi(F_P)=
 \begin{cases}
 4b_4,&r\text{ even},\\
 2-4b_4,&r\text{ odd}.
 \end{cases}
\end{equation}
Consequently $\chi(F_P)=0$ if and only if $r$ is even and $b_4=0$.
Moreover
\begin{equation}\label{eq:nu4}
 \nu_{X_n}(P)=(-1)^{N+r}(1-4b_4).
\end{equation}
\end{theorem}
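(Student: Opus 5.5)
Following the proofs of Theorems~\ref{thm:square} and~\ref{thm:cube}, I will compute $\chi_c(\mathcal X_{4,P}(f_n))$ directly and then apply \eqref{eq:DL}. For the monodromy statement I will use Theorem~\ref{thm:jets} with $m=4$. Write a $4$-jet as $\gamma(t)=tv_1+t^2v_2+t^3v_3+t^4v_4$. Expanding $f=Q+C+D_x+O(5)$ modulo $t^5$ gives the coefficients
\begin{align*}
[t^2]&=Q(v_1),\qquad [t^3]=2B(v_1,v_2)+C(v_1),\\
[t^4]&=Q(v_2)+2B(v_1,v_3)+dC_{v_1}(v_2)+D_x(v_1).
\end{align*}
The jet space is therefore $\{[t^2]=0,\ [t^3]=0,\ [t^4]=1\}\times\A^N$, since $v_4$ is free. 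I will stratify according to the position of $v_1$, using throughout that any stratum fibred by $\C^*$-free actions, or fibred by affine spaces over a $\chi_c=0$ base, contributes zero.

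\emph{Stratum $v_1\notin K$, $Q(v_1)=0$.} Here $\ell=B(v_1,-)\neq0$, so the $[t^4]$ equation is a nontrivial affine-linear equation in $v_3$. The fibre over each admissible $(v_1,v_2)$ is an affine space, and the contribution equals $\chi_c$ of the locus of $(v_1,v_2)$ with $Q(v_1)=0$, $v_1\notin K$ and $2B(v_1,v_2)=-C(v_1)$. That locus is itself an affine bundle over $\{Q=0\}\setminus K$, which has $\chi_c=0$ by the argument in Theorem~\ref{thm:cube}. So this stratum contributes $0$.

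\emph{Stratum $v_1\in K$.} The conditions become $c(v_1)=0$ and
\[
Q(v_2)+dC_{v_1}(v_2)+D_x(v_1)=1,
\]
with $v_3$ free. Decompose $v_2=u+k$ with $u\in L$ and $k\in K$. Then $Q(v_2)=Q(u)$, and $dC_{v_1}(v_2)=a(v_1)(u)+dc_{v_1}(k)$. Completing the square on $L$ via $u=u'-\tfrac12\mathsf B^{-1}a(v_1)$ turns the equation into
\[
Q(u')+dc_{v_1}(k)+h(v_1)=1.
\]
Split further. If $dc_{v_1}\neq0$, the equation is nontrivially affine-linear in $k$, so the fibres are affine spaces. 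The base is the set of $v_1\in K\setminus\Sigma$ with $c(v_1)=0$, which is $\C^*$-stable with free action away from $0$, so $\chi_c=0$ and this piece contributes nothing. If $v_1\in\Sigma$, then $c(v_1)=0$ holds automatically by Euler's relation, and the equation reduces to $Q(u')=1-h(v_1)$ with $k$ free.

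Fibrewise over $\Sigma$, the quadric level set $\{Q=s\}\subset L\cong\C^r$ has $\chi_c=1-(-1)^r$ when $s\neq0$, by Theorem~\ref{thm:square}. When $s=0$ it is the cone $\{Q=0\}$, with $\chi_c=1$ (the origin plus a $\C^*$-bundle). The contribution from $\Sigma$ is therefore
\[
\chi_c(\{v\in\Sigma:h(v)=1\})\cdot 1+\chi_c(\{v\in\Sigma:h(v)\neq1\})\,(1-(-1)^r).
\]
Next I use $\C^*$-homogeneity on $\Sigma$. The set $\{h=1\}$ is a $\mu_4$-cover of $B_4$, so it has $\chi_c=4b_4$. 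The set $\{h\neq1\}$ is $\Sigma\setminus\{h=1\}$, and $\chi_c(\Sigma)=1$ because $\Sigma$ is a cone (the origin plus a free $\C^*$-part). Hence $\chi_c(\{h\neq1\})=1-4b_4$. Summing gives
\[
4b_4+(1-4b_4)(1-(-1)^r)=1+(-1)^r(4b_4-1),
\]
which is \eqref{eq:L4}. Under unipotence, Theorem~\ref{thm:jets} then gives \eqref{eq:fourth}. Formula \eqref{eq:nu4} follows from $\nu=(-1)^N(1-\chi(F_P))$ by a two-case parity check, and the equivalence $\chi(F_P)=0$ iff ($r$ even and $b_4=0$) follows because $2-4b_4\neq0$ for integer $b_4$.

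\textbf{Main obstacle.} I expect the delicate point to be the constructible bookkeeping over $\Sigma$. The fibres of $\{Q=1-h(v_1)\}$ jump at $h=1$, so the Euler-characteristic-of-fibration argument must be applied on the stratification $\{h=1\}$ versus $\{h\neq1\}$. On $\{h\neq1\}$ the fibre is a smooth affine quadric whose topology is constant, which is enough for multiplicativity of $\chi_c$. A second point is that $\Sigma$ should be taken reduced, and the independence of $h$ from the choice of $L$ should be checked, so that each piece is a well-defined locally trivial family. The formula itself only needs the stratified count.
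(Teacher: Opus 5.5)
Your proposal is correct and follows the paper's proof essentially step for step. It uses the same stratification of the $4$-jet space into $v_1\notin K$, $v_1\in K\setminus\Sigma$ with $c(v_1)=0$, and $v_1\in\Sigma$; the same completion of the square on $L$ producing $h$; and the same Euler integration over $\Sigma$, split along $\{h=1\}$, with $\chi_c(\{h=1\})=4b_4$ and $\chi_c(\Sigma)=1$. You only spell out the affine fibres in the first stratum and the constant-fibre argument over $\{h\neq1\}$ in more detail than the paper does.
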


\begin{proof}
A four-jet is $\gamma=t v+t^2w+t^3z+t^4e$. The contact equations are
\begin{align*}
 Q(v)&=0,\\
 2B(v,w)+C(v)&=0,\\
 2B(v,z)+Q(w)+d(C)_v(w)+D_x(v)&=1.
\end{align*}
The vector $e$ is arbitrary. If $v\notin K$, the second and third
conditions give affine hyperplanes successively in $w$ and $z$.
Their contribution is $\chi_c(\{Q=0\}\setminus K)=0$, by the
punctured-cone argument in the cube proof.

If $v\in K$, the second condition is $c(v)=0$.
Write $w=u+w_K$ with $u\in L$ and $w_K\in K$.
If $dc_v\ne0$, the last equation is a nonzero linear equation in
$w_K$, after choosing $u$. This contribution is
\[
 \chi_c\{v\in K:c(v)=0,\ dc_v\ne0\}=0,
\]
since this is a scaling-invariant constructible subset avoiding the
vertex, with a $\C^*$-bundle over its projectivization.

It remains to take $v\in\Sigma$; Euler's homogeneous identity then
also gives $c(v)=0$. Completing the square in $u$ gives
\[
 Q(u)+a(v)(u)+D_x(v)
 =Q\left(u+\frac12 \mathsf B^{-1}a(v)\right)+h(v).
\]
All other jet variables are free. Thus the remaining Euler
characteristic is that of
\[
 \{(v,u)\in\Sigma\times L:Q(u)=1-h(v)\}.
\]
An affine quadratic cone has Euler characteristic $1$, while a
nonzero level of $Q$ has Euler characteristic $1-(-1)^r$.
These formulas also hold for $r=0$, when the nonzero level is empty.
The cone $\Sigma$ has Euler characteristic $1$: its vertex contributes
one and its complement is a $\C^*$-bundle.
Set $A_4=\{v\in\Sigma:h(v)=1\}$. Euler integration along the last
projection therefore gives
\[
 \chi_c(A_4)+(1-(-1)^r)(1-\chi_c(A_4))
 =1+(-1)^r(\chi_c(A_4)-1).
\]
Projectivization $A_4\to B_4$ is finite \'etale of degree four, so
$\chi_c(A_4)=4b_4$. Apply the Denef--Loeser trace formula.
If $T_P^4$ is unipotent, $L(T_P^4)=\chi(F_P)$. Since $b_4$ is an integer,
$2-4b_4$ can never be zero. Behrend's formula proves \eqref{eq:nu4}.
\end{proof}

\begin{corollary}\label{cor:fourthhilb}
Suppose $T_{x}^4$ is unipotent and put
$r=N-\dim T_PX_n$. Then
\[
 \nu_{X_n}(P)=(-1)^{\dim T_PX_n}(1-4b_4).
\]
The expected value $(-1)^n$ occurs precisely when $r$ is even and
$b_4=0$. In particular, odd Hessian rank forces nonconstancy relative
to the expected value at that point without knowing $b_4$.
\end{corollary}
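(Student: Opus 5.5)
The corollary is Theorem~\ref{thm:fourth} specialized to the trace chart of $X_n$, with $r$ read off from Proposition~\ref{prop:matrixlift}. The plan is to start from \eqref{eq:nu4}, $\nu_{X_n}(P)=(-1)^{N+r}(1-4b_4)$, which holds whenever $T_P^4$ is unipotent. We would apply it to the trace polynomial on the matrix space $E$ at $(A_0,B_0,C_0)$. This is legitimate because Proposition~\ref{prop:matrixlift} says the Milnor cohomology and monodromy there agree with those of $f_n$ at $P$, and that the Hessian rank is $r=N-\dim T_PX_n$. Since $N+r\equiv N-r\pmod 2$, we get $(-1)^{N+r}=(-1)^{\dim T_PX_n}$, which gives the displayed formula.

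Two side points need care. First, the invariants $K$, $\Sigma$, $h$ and $b_4$ in Theorem~\ref{thm:fourth} are built from the Taylor expansion on the matrix space, where $f=Q+C$ exactly, so $D_x=0$. The corrected quartic is then $h=-\tfrac14\ell_z(\mathsf B^{-1}\ell_z)$, which matches the definition in the introduction. Second, the extra smooth variables, namely the $n^2$ gauge directions and the vector $v$, only enlarge $K$ by directions in which $f$ is constant. These add a free factor to $\Sigma$ on which $h$ is constant. We therefore need to check that $b_4$ is computed on the reduced matrix chart as stated; this is the one step requiring attention. The cleanest route is to work throughout on $E$, where Theorem~\ref{thm:fourth} applies directly to the polynomial germ with $N_E=3n^2$. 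We would then note that $\dim E-\operatorname{rank}=\dim T_PX_n+n^2$, so the parity prefactor changes by $(-1)^{n^2}=(-1)^n$ relative to $N=2n^2+n$. Behrend's formula on $M$ versus $E$ involves the same parity shift, and the two shifts cancel.

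For the final assertions, the expected value is $(-1)^n$. Since $N=2n^2+n\equiv n\pmod2$, the equation $(-1)^{N+r}(1-4b_4)=(-1)^n$ becomes $(-1)^r(1-4b_4)=1$. Because $b_4$ is an integer, $1-4b_4=\pm1$ forces $b_4=0$, since $1-4b_4=-1$ is impossible. So the equation holds exactly when $b_4=0$ and $r$ is even. If $r$ is odd, the value $\pm(1-4b_4)$ would need $1-4b_4=-1$, which has no integer solution. Hence $\nu_{X_n}(P)\neq(-1)^n$ without any knowledge of $b_4$. This is the same as the statement in Theorem~\ref{thm:fourth} that $2-4b_4\neq0$.
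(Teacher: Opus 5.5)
Your proposal is correct and follows the paper's route: the corollary is \eqref{eq:nu4} with $r=N-\dim T_PX_n$ from Proposition~\ref{prop:matrixlift} and $(-1)^{N+r}=(-1)^{N-r}$. The characterization of the value $(-1)^n$ then comes from integrality of $b_4$, which gives $1-4b_4\neq-1$, equivalently $2-4b_4\neq0$.

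Your side remark contains a slip, though it does not affect the argument. On $E$ the Hessian kernel has dimension $\dim T_PX_n+n^2-n$, not $\dim T_PX_n+n^2$. Since $3n^2\equiv 2n^2+n\pmod 2$, there is no parity shift to cancel at all. In any case, only $\chi(F_P)$, which depends on $r$ and $b_4$ alone, needs to be transported from $E$ before you apply Behrend's formula on $M$.
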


\section{Torus actions and  the monodromy power}\label{sec:torus_action}

In this section we give a criterion of the unipotency of the local monodromy using torus action. 

\subsection{Torus action}
\begin{proposition}\label{prop:torus}
Suppose a $\C^*$-action on the smooth germ $(M,P)$ that fixes $P$ and let $f_n: M\to \C$ be the potential function.  
If we have
\[
 f_n(t\cdot u)=t^q f_n(u),\qquad q\in\mathbb{Z}\setminus\{0\}.
\]
Then the local monodromy has a representative of order dividing $|q|$. 
In particular $T_P^{|q|}=1$ on the cohomology.
\end{proposition}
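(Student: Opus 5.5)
The plan is to realize the monodromy geometrically by the $\C^*$-action itself. First, I would reduce to the case of a good (contracting) action, or at least one for which the Milnor fibration is compatible with the action. Replacing $t$ by $t^{-1}$ if necessary, we may assume $q>0$. Then $f_n(t\cdot u)=t^q f_n(u)$ gives, for $t=e^{i\theta}$ with $\theta\in[0,2\pi/q]$, a family of maps
\[
 h_\theta:f_n^{-1}(\eta)\to f_n^{-1}(e^{iq\theta}\eta),\qquad u\mapsto e^{i\theta}\cdot u.
\]
This family lifts the loop $s\mapsto e^{is}\eta$, $s\in[0,2\pi]$, traversed once as $\theta$ runs over $[0,2\pi/q]$. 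The map $h:=h_{2\pi/q}$ therefore maps the global fiber $f_n^{-1}(\eta)$ to itself and represents the geometric monodromy. Moreover $h^q=$ the action of $e^{2\pi i}=1$, which is the identity. So $h$ has order dividing $q$.

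The key steps, in order, are as follows.

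\begin{enumerate}
\item Linearize the action near $P$. Since $\C^*$ is reductive, the action of the compact subgroup $S^1$ fixing $P$ can be linearized analytically (Cartan/Bochner linearization). We may then choose an $S^1$-invariant Hermitian metric, so that the balls $B(P,\epsilon)$ are $S^1$-invariant.
\item Restrict to the unit circle. The $S^1$-part of the action preserves $B(P,\epsilon)$, and the formula $f_n(e^{i\theta}u)=e^{iq\theta}f_n(u)$ shows that $h_\theta$ maps $F_P=f_n^{-1}(\eta)\cap B(P,\epsilon)$ onto $f_n^{-1}(e^{iq\theta}\eta)\cap B(P,\epsilon)$.
\item Identify $h$ with the monodromy. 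The family $\{h_\theta\}$ is a trivialization of the Milnor fibration restricted to the loop. By the uniqueness up to isotopy of the monodromy defined by parallel transport, the map $h=h_{2\pi/q}$ represents $T_P$.
\item Conclude. Since $h^q=\mathrm{id}$ on $F_P$, we get $T_P^{q}=1$ on $H^*(F_P,\Q)$.
\end{enumerate}

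Note that only the $S^1\subset\C^*$ part of the action is used. This is important, because the real positive part of $\C^*$ need not preserve balls.

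The main obstacle is step 1 and its compatibility with the Milnor fibration. On the germ we need an $S^1$-invariant system of neighborhoods for which Milnor's fibration theorem still holds. I would handle this by linearizing the $S^1$-action at the fixed point $P$: averaging over the compact group gives an equivariant chart in which $S^1$ acts unitarily. In that chart the standard round balls are $S^1$-invariant, and Milnor's theorem applies to them directly.

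In the Hilbert scheme application, the action is induced by torus weights on $(x,y,z)$. These act linearly on $E=\operatorname{End}(\C^n)^3$ by scaling the matrices $A,B,C$, so that $\Tr(A[B,C])$ scales by $t^{q}$. Combined with Proposition~\ref{prop:matrixlift}, this removes the linearization issue entirely, since the action is already linear and unitary for $|t|=1$.
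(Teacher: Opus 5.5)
Your proof is correct and is essentially the paper's argument: restrict to $S^1$, linearize holomorphically at $P$ to get invariant balls, and use $u\mapsto e^{i\theta}\cdot u$ for $\theta\in[0,2\pi/q]$ as a lift of the boundary loop, so the return map is the action of $e^{2\pi i/q}$, whose $q$th power is the identity (your parametrization of this transport is in fact stated more cleanly than the paper's). One small caveat, affecting only your closing remark: plain scaling of $(A,B,C)$ does not fix $(A_0,B_0,C_0)\in E$, so you must compose it with the compensating one-parameter subgroup of $\mathrm{GL}_n$ given by the torus action on $S/I$; the composite action is still linear and, in a weight basis, unitary on $S^1$, so the remark's conclusion stands.
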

\begin{proof}
Restrict to the compact subgroup $\mathbb{S}^1$. Holomorphic linearization of a
compact group at a fixed point gives local coordinates with a unitary
$\mathbb{S}^1$-action, and hence an invariant sufficiently small ball. Transport
around the target circle is given by $u\mapsto e^{2\pi i s/q}\cdot u$
for $0\leq s\le q-1$. This preserves the ball and maps each Milnor fibre
to the fibre over $e^{2\pi i s/q}f_n(u)$. The return map is the action of
$e^{2\pi i/q}$, whose $|q|$th power is the identity.
\end{proof}

Weights need not be positive. Fixing the centre $P$ is essential:
an action that moves $P$ gives no such trivialization of its local
Milnor fibration. In particular, a global weight-one action on $f_n$
does not prove local unipotence at every Hilbert point.

The coordinate torus acts on $M$ by scaling $(A,B,C)$, and the
potential has character $t_1t_2t_3$. Suppose an ideal $I$ is invariant
under
\[
 (x,y,z)\longmapsto(t^a x,t^b y,t^c z).
\]
Then $P=[I]$ is fixed and the potential has weight $q=a+b+c$.
We obtain the following concrete tests:
\begin{center}
\begin{tabular}{p{0.23\textwidth}p{0.66\textwidth}}
\toprule
Hypothesis  & Consequence at $P$\\
\midrule
$|a+b+c|=1$ & $\chi(F_{P})=0$ and $\nu(x)=(-1)^n$.\\[3pt]
$|a+b+c|=2$ & $\chi(F_{P})=0$ precisely when
$\dim T_PX_n\equiv n\pmod2$.\\[3pt]
$|a+b+c|=4$ & Vanishing is equivalent to even Hessian rank and $b_4=0$ from Theorem~\ref{thm:fourth}.\\[3pt]
$|a+b+c|=3$ & Vanishing is equivalent to
$\chi_c(\PP(T_PX_n)\setminus V(c))=0$, with $c$ as above.\\
\bottomrule
\end{tabular}
\end{center}

Every monomial ideal is invariant under the grading $(1,0,0)$, so the
first row recovers the expected Behrend value for monomial ideals.
More generally it applies to any ideal homogeneous for an integral
grading whose three weights sum to $1$ or $-1$.
Every standard homogeneous ideal is fixed by weights $(1,1,1)$.
Consequently the weight-three row applies to every standard homogeneous
ideal, i.e.,  its Behrend value is controlled by a cubic on the tangent space.

\subsection{Jelisiejew-Kool-Schmiermann length-$24$ example}

The ideal
\[
 I=\bigl((x^2)+(y,z)^2\bigr)^2+(y^3-x^3z)
\]
is homogeneous for weights $(1,1,0)$. Its trace potential has weight
$2$, so Proposition~\ref{prop:torus} gives $T_P^2=1$ at $[I]$.
Using the tangent dimension $99$ established in
\cite[proof of Theorem 6]{JKS},
\[
 N_{24}=1176,\qquad r=1176-99=1077.
\]
Theorem~\ref{thm:square} therefore yields $\chi(F_{[I]})=2$ and
$\nu([I])=-1$. 

\begin{remark}
The above calculation  also determines the  monodromy zeta function in the Appendix. Write
$e_+$ and $e_-$ for the alternating dimensions of the $+1$ and $-1$
eigenspaces. A'Campo's formula gives $e_+-e_-=0$, while the Euler characteristic
gives $e_++e_-=2$. Hence
\begin{equation}\label{eq:examplezeta}
 e_+=e_-=1,\qquad \zeta_{f,[I]}(t)=\frac1{1-t^2}
\end{equation}
where the last formula is the monodromy zeta function in the Appendix.
\end{remark}

\section{Ideals with unipotent second, third and fourth powers}
\label{sec:examples}

In this section we supply explicit ideals, and prove Theorem \ref{thm:family}.  All identities $T_P^q=1$ below refer to ordinary
Milnor cohomology of the standard trace potential, including $H^0$.

\subsection{Trivial monodromy examples}

For a weighted-homogeneous ideal under weights $(a,b,c)$, the point in
$X_n$ is fixed by the corresponding one-parameter subgroup. The
potential has weight $a+b+c$. Proposition~\ref{prop:torus} gives
\[
 a+b+c=q\ne0\quad\Longrightarrow\quad T_P^{|q|}=1.
\]
Each monomial generator is homogeneous for every diagonal grading.
For a binomial generator it suffices that its two monomials have the
same weight. Different generators need not have the same weight.
This last observation is important for the examples below.

Every finite-colength monomial ideal admits weights $(1,0,0)$, so
$T_P=1$. For instance
\[
 I_{a,b,c}=(x^a,y^b,z^c),\qquad
 \operatorname{length}(\C[x,y,z]/I_{a,b,c})=abc
\]
has $T_P=1$, $\chi(F_P)=0$, and $\nu_{X_n}=(-1)^{abc}$.

\subsection{A common finite-flat construction}
Use separate variables $u,v,w$ and the ideal
\begin{equation}\label{eq:seed12}
 J=\bigl((u)+(v,w)^2\bigr)^2+(v^3-uw)\subset R=\C[u,v,w].
\end{equation}
From  \cite[Equation (1.1)]{GGGL}, we have
\[
 \dim R/J=12,\qquad \dim\operatorname{Hom}_R(J,R/J)=45.
\]
This is the example in  \cite[Equation (1.1)]{GGGL} that disproves the parity conjecture of tangent spaces which is given by 
$$
\dim_{\C}T_{X_n}|_{P}\equiv n\mod (2).
$$

For positive integers $p,q,r$, define in $S=\C[x,y,z]$
\begin{equation}\label{eq:flatfamily}
 I_{p,q,r}
 =\bigl((x^p)+(y^q,z^r)^2\bigr)^2+(y^{3q}-x^p z^r).
\end{equation}
These ideals are all supported at the origin.

\begin{proposition}\label{prop:flatfamily}
The ideal $I_{p,q,r}$ has colength and tangent dimension
\[
 n=12pqr,\qquad \dim T_{[I_{p,q,r}]}X_n=45pqr.
\]
If integers $(\alpha,\beta,\gamma)$ satisfy
\[
 p\alpha+r\gamma=3q\beta,
\]
and $d=\alpha+\beta+\gamma\ne0$, then $T_P^{|d|}=1$ at this ideal.
\end{proposition}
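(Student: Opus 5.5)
The plan is to deduce everything from the seed ideal $J\subset R=\C[u,v,w]$ in \eqref{eq:seed12} by a finite flat base change. Consider the ring map
\[
 \phi:R\longrightarrow S,\qquad u\mapsto x^p,\quad v\mapsto y^q,\quad w\mapsto z^r .
\]
It makes $S$ a free $R$-module with basis the $pqr$ monomials $x^iy^jz^k$, $0\le i<p$, $0\le j<q$, $0\le k<r$. By construction $\phi(J)S=I_{p,q,r}$: the generators $((u)+(v,w)^2)^2$ go to those of $((x^p)+(y^q,z^r)^2)^2$, and $v^3-uw$ goes to $y^{3q}-x^pz^r$.

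\textbf{Colength.} Flatness gives $S/I_{p,q,r}\cong (R/J)\otimes_R S$. This is a free $R/J$-module of rank $pqr$, so its dimension is $12pqr$. The support is the origin, since $R/J$ is supported at $u=v=w=0$ and the preimage of that point under $\phi$ is $x=y=z=0$.

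\textbf{Tangent dimension.} I would use $\dim T_{[I]}X_n=\dim\operatorname{Hom}_S(I,S/I)$. Write $I=J\otimes_R S$; this is valid because $S$ is flat over $R$, so $J\otimes_R S\to S$ is injective with image $JS$. Then
\[
 \operatorname{Hom}_S(J\otimes_RS,\,(R/J)\otimes_RS)\cong \operatorname{Hom}_R(J,(R/J)\otimes_R S)\cong \operatorname{Hom}_R(J,R/J)\otimes_R S .
\]
The last isomorphism holds because $J$ is finitely presented and $S$ is flat (indeed free of finite rank, so $\operatorname{Hom}_R(J,-)$ commutes with $\otimes_R S\cong(-)^{\oplus pqr}$). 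The module $\operatorname{Hom}_R(J,R/J)$ has dimension $45$ and is killed by $J$. Tensoring with a free rank-$pqr$ module therefore multiplies $\C$-dimension by $pqr$, giving $45pqr$.

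\textbf{Monodromy.} Let $\C^*$ act by $(x,y,z)\mapsto(t^\alpha x,t^\beta y,t^\gamma z)$. The generators of $((x^p)+(y^q,z^r)^2)^2$ are monomials, so they are homogeneous for any weights. The binomial $y^{3q}-x^pz^r$ is homogeneous exactly when $3q\beta=p\alpha+r\gamma$. So under the hypothesis $I_{p,q,r}$ is torus-invariant and $[I]$ is a fixed point of $X_n$. The fixed point lies inside the smooth chart $M$, as the induced action on $(A,B,C,v)$, scaling $v$ trivially, commutes with $\mathrm{GL}_n$. The potential $\Tr(A[B,C])$ has weight $d=\alpha+\beta+\gamma\neq0$. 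Proposition~\ref{prop:torus} then gives $T_P^{|d|}=1$.

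\textbf{Main obstacle.} The step needing the most care is the tangent-space base change. I must make sure the tangent space of $X_n$ at $[I]$ is $\operatorname{Hom}_S(I,S/I)$ computed over the full polynomial ring $S$, not a localization. This is fine because $S/I$ is Artinian, so Hom into it is local. I must also check that the identification of $J\otimes_RS$ with $I$ uses flatness correctly. Everything else is bookkeeping. The input values $12$ and $45$ are taken from \cite{GGGL}, as stated.
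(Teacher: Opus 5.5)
Your proposal is correct and follows essentially the same route as the paper. Both arguments use the finite free base change $u\mapsto x^p$, $v\mapsto y^q$, $w\mapsto z^r$ to get colength $12pqr$ and $\operatorname{Hom}_S(I,S/I)\cong\operatorname{Hom}_R(J,R/J)\otimes_R S$ of dimension $45pqr$. Both then use the diagonal grading that makes the binomial homogeneous, together with Proposition~\ref{prop:torus}. Your justification of the Hom base change through finite freeness (so $\operatorname{Hom}_R(J,-)$ commutes with $(-)^{\oplus pqr}$) is a slightly more direct version of the paper's finite-presentation-plus-flatness argument.
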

\begin{proof}
The map $R\to S$ given by
$u\mapsto x^p$, $v\mapsto y^q$, $w\mapsto z^r$ makes $S$ finite free
of rank $pqr$ over $R$, with basis $x^iy^jz^k$ for
$0\le i<p$, $0\le j<q$, $0\le k<r$.
The extended ideal is $JS=I_{p,q,r}$. Flatness and finite presentation
give
\[
 \operatorname{Hom}_S(JS,S/JS)
 \simeq \operatorname{Hom}_R(J,R/J)\otimes_R S.
\]
For completeness this base-change identity follows by taking a finite
presentation of $J$, expressing Hom as a kernel between finite powers
of $R/J$, and using flatness of $S$ to preserve that kernel.
Both dimension formulas now follow from the free basis.
The quotient is supported at the origin because it contains powers
of $x,z$, and its binomial relation then forces a power of $y$ to
vanish. The grading equation makes the binomial homogeneous and  the
other generators are monomials. Apply the grading certificate gives the result.
\end{proof}

Let $g=\gcd(p,3q,r)$. The image of the trace character on the
cocharacter lattice of the connected diagonal stabilizer is
$D$, where
\begin{equation}\label{eq:stabilizerD}
 D=\frac{\gcd(p+3q,r+3q)}{g}.
\end{equation}
Indeed that lattice is the kernel of the primitive row
$(p,-3q,r)/g$, and the trace character is the row $(1,1,1)$.
The gcd of the two-by-two minors of these two rows is $D$.
Since the first row is primitive, this gcd is exactly the index of
the image of the second row on the kernel. Equivalently this follows
by the Smith normal form. Thus a grading with trace weight $D$
exists and gives $T^D=1$.
This computes a bound certified by diagonal gradings, not the exact
order of monodromy. In this family the only condition on the
connected diagonal stabilizer is the displayed binomial relation:
its two monomials survive independently modulo the monomial part.

\subsection{Second powers: the proof of (1) and (i) in Theorem \ref{thm:family}}
The ideals $P\in X_n$ is given as follows. 
Set
\[
P= I_m=I_{3,1,m}
 =\bigl((x^3)+(y,z^m)^2\bigr)^2+(y^3-x^3z^m),\qquad m\ge1.
\]
Weights $(1,1,0)$ give trace weight two. Let $\tau$ represent the tangent dimension. Therefore
\[
 T_P^2=1,\quad n=36m,\quad \tau=135m,
 \quad \nu_{X_n}(I_m)=(-1)^m,\quad \chi(F_{I_m})=1-(-1)^m.
\]
The last two identities use the square criterion.
If $m$ is odd, $\chi(F_P)=2$; A'Campo's $L(T_P)=0$ excludes $T_P=1$.
Thus the cohomological monodromy has \emph{exact order two} for odd
$m$. For even $m$, weights
\[
 \left(-\frac{m+2}{2},\frac{m-2}{2},3\right)
\]
have sum one and preserve the ideal, so in fact $T_P=1$.
Here is the table of $m=1, 2, 3, 4, 5$ for $P=I_m$ in $X_n$:
\begin{center}
\begin{tabular}{rrrrl}
\toprule
$m$ & length & tangent dimension & $\chi(F_P)$ & monodromy order\\
\midrule
1&36&135&2&2\\
2&72&270&0&1\\
3&108&405&2&2\\
4&144&540&0&1\\
5&180&675&2&2\\
\bottomrule
\end{tabular}
\end{center}

The  ideal in \cite{JKS}
\[
 I_{\mathrm{JKS}}=\bigl((x^2)+(y,z)^2\bigr)^2+(y^3-x^3z)
\]
also has exact order two: its length is $24$, its tangent dimension
is $99$, weights $(1,1,0)$ give $T_P^2=1$, and $\chi(F_P)=2$.
It must not be confused with $I_{2,1,1}$, whose binomial is instead
$y^3-x^2z$ and whose tangent dimension is $90$.

\subsection{Third powers: the proof of (2) in Theorem \ref{thm:family}}
For $d\ge2$, put $\mathfrak m=(x,y,z)$ and
\begin{equation}\label{eq:fermatideals}
 B_d=\mathfrak m^{d+1}+(x^d+y^d+z^d).
\end{equation}
Its length is
\[
 n_d=\binom{d+3}{3}-1.
\]
To see this, all monomials of total degree at most $d$ survive modulo
$\mathfrak m^{d+1}$, except for one linear relation in degree $d$.
Multiples of that relation in positive degree are already zero.
Weights $(1,1,1)$ fix $B_d$ and give $T_P^3=1$.
Thus the first three examples have lengths $9,19,34$, respectively.
The cube formula gives
\[
 \chi(F_{B_d})=3b_{3,B_d},\qquad
 \nu_{X_n}(B_d)=(-1)^{n_d}(1-3b_{3,B_d}).
\]
No numerical value of $b_{3,B_d}$ is asserted here.
For a diagonal one-parameter subgroup to preserve the degree-$d$
relation, its weights must satisfy $d\alpha=d\beta=d\gamma$.
Thus its nonzero trace weights are multiples of three. This excludes
a weight-one certificate in the diagonal torus, but does \emph{not}
prove $T_P\ne 1$.

More generally, for any vector subspace
$W\subset\C[x,y,z]_d$, the ideal $W+\mathfrak m^{d+1}$ has
length $\binom{d+3}{3}-\dim W$ and satisfies $T_P^3=1$.

The finite-flat construction gives another family with known tangent
dimensions:
\[
 C_s=I_{3s,2s,3s}
 =\bigl((x^{3s})+(y^{2s},z^{3s})^2\bigr)^2
 +(y^{6s}-x^{3s}z^{3s}),\qquad s\ge1.
\]
It is standard homogeneous, has length $216s^3$ and tangent dimension
$810s^3$, and satisfies $T_P^3=1$. Formula \eqref{eq:stabilizerD} gives
$D=3$. Again, the actual order may be one or three; that question is
not settled merely by the grading or tangent dimension.

\subsection{Fourth powers: the proof of (3) in Theorem \ref{thm:family}}
Set
\[
 D_s=I_{s,s,s}
 =\bigl((x^s)+(y^s,z^s)^2\bigr)^2+(y^{3s}-x^s z^s),\qquad s\ge1.
\]
Weights $(2,1,1)$ give $T_P^4=1$, and
\[
 n=12s^3,\qquad \tau=45s^3.
\]
The connected diagonal stabilizer has smallest positive trace weight
four, by \eqref{eq:stabilizerD}. This still does not establish exact
monodromy order four.

The fourth-power criterion gives, with its effective-quartic integer
$b_{4,D_s}$,
\[
 \chi(F_{D_s})=
 \begin{cases}4b_{4,D_s},&s\text{ even},\\
 2-4b_{4,D_s},&s\text{ odd}.
 \end{cases}
\]
In particular
\begin{equation}\label{eq:12congruence}
 s\text{ odd}\quad\Longrightarrow\quad
 \chi(F_{D_s})\equiv2\pmod4,\qquad \nu_{X_n}(D_s)\equiv-1\pmod4.
\end{equation}
The first lengths are $12,96,324,768$.
For odd $s$, $T_P$ cannot be unipotent, and its order is either two or
four. Nor can $T_P^3$ be unipotent: combined with $T_P^4=1$, that would
force $T_P=1$. No primitive fourth-root eigenspace is claimed to have
been computed here.

\begin{proposition}\label{prop:12deduction}
At the explicit seed $D_1=J$ (with variables renamed),
\[
 \nu_{X_{12}}(J)\equiv-1\pmod4.
\]
In particular its value is not $1$. The grading and trace identities
therefore imply nonconstancy on $X_{12}$, and adjoining disjoint
reduced points propagates this conclusion to every $X_n$, $n\ge12$.
 It does not compute the exact
integer $\nu_{X_{12}}(J)$.
\end{proposition}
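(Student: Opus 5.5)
The plan is to specialize the fourth-power analysis of \S\ref{sec:examples} to $s=1$, where $D_1=J$ after renaming $u,v,w$ to $x,y,z$. We have $n=12$ and $N=2\cdot 12^2+12=300$. By Proposition~\ref{prop:flatfamily} (or directly from \cite[Equation (1.1)]{GGGL}), the tangent dimension is $\dim T_{[J]}X_{12}=45$. Hence the Hessian rank is
\[
 r=N-\dim T_{[J]}X_{12}=300-45=255,
\]
which is odd.

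The argument then runs in three steps.

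\textbf{Monodromy.} The ideal $J$ is homogeneous for the weights $(2,1,1)$. The generators coming from $\bigl((x)+(y,z)^2\bigr)^2$ are monomials, and the binomial $y^3-xz$ has both terms of weight $3$. The trace weight is $2+1+1=4$, so Proposition~\ref{prop:torus} gives $T_P^4=1$, and in particular $T_P^4$ is unipotent.

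\textbf{Euler characteristic.} Apply Theorem~\ref{thm:fourth}, or equivalently Corollary~\ref{cor:fourthhilb}. Since $r$ is odd, it gives $\chi(F_P)=2-4b_4$ for the integer $b_4=\chi_c(B_4)$. Behrend's formula \eqref{eq:behrend} then yields
\[
 \nu_{X_{12}}(J)=(-1)^{12}\bigl(1-\chi(F_P)\bigr)=4b_4-1\equiv -1\pmod 4.
\]
In particular $\nu_{X_{12}}(J)\neq 1=(-1)^{12}$. No value of $b_4$ is needed; only its integrality matters.

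\textbf{Propagation.} On $X_{12}$ the monomial ideals have Behrend value $(-1)^{12}=1$ by Corollary~\ref{cor:unipotent}, using the weight $(1,0,0)$ row of \S\ref{sec:torus_action}. Together with the value at $J$, this gives nonconstancy on $X_{12}$.

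For $n\ge 12$, take $Z=J\sqcup\{p_1,\dots,p_{n-12}\}$ with distinct reduced points away from the origin. Near $Z$, $\Hilb^n(\C^3)$ is étale-locally $\Hilb^{12}(\C^3)\times\prod\Hilb^1(\C^3)$. The Behrend function is multiplicative for products and equals $-1$ at a smooth point of dimension $3$, so
\[
 \nu_{X_n}(Z)=\nu_{X_{12}}(J)\,(-1)^{n-12}\neq(-1)^n,
\]
because $\nu_{X_{12}}(J)\ne 1$. Comparing with a monomial ideal in $X_n$, which has value $(-1)^n$, shows that $\nu_{X_n}$ is nonconstant.

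The main point needing care is the input $\dim T_{[J]}X_{12}=45$, equivalently $r$ odd, which rests on the Hom computation in \cite{GGGL}. The only other nonformal ingredient is the étale-local product decomposition of the Hilbert scheme at a disjoint union, which is standard.
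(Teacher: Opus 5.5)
Your proof is correct, but its central step differs from the paper's proof of this proposition. You feed $T_P^4=1$ and the odd Hessian rank $r=255$ into the fourth-power criterion, Theorem~\ref{thm:fourth}, to get $\chi(F_P)=2-4b_4$ and hence $\nu_{X_{12}}(J)=4b_4-1$. This is exactly how the paper derives \eqref{eq:12congruence} in the text just before the proposition.

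The paper's proof deliberately avoids the four-jet analysis. It uses only the two-jet identity $L(T_P^2)=1-(-1)^{r}=2$ from Theorem~\ref{thm:square}, which needs no monodromy hypothesis. It then observes that $T_P^4=1$ on rational cohomology forces the alternating multiplicities of the eigenvalues $i$ and $-i$ to agree. Hence $\chi(F_P)-L(T_P^2)=4e_i$, and so $\chi(F_P)\equiv 2\pmod 4$.

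What each route buys:
\begin{itemize}
\item Your route is shorter once Theorem~\ref{thm:fourth} is available. It also gives the exact expression $\nu_{X_{12}}(J)=4b_4-1$, which is what later turns the computation $b_{4,D_1}=1$ into $\nu_{X_{12}}(D_1)=3$.
\item The paper's route rests only on the quadric level-set computation and the rationality of the monodromy. It does not depend on the stratification and completing-the-square argument for four-jets. It also identifies the deviation of $\chi(F_P)$ from $2$ as $4e_i$, contributed by the primitive fourth-root eigenspaces.
\end{itemize}

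Your propagation step matches the paper's, using the product and \'etale properties of $\nu$. Comparing against monomial ideals via Corollary~\ref{cor:unipotent}, rather than against reduced subschemes, makes no difference.
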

\begin{proof}
We give an argument that does not require computing the effective
quartic. The ideal is fixed by weights $(2,1,1)$, so $T_P^4=1$.
Its Hessian rank is
\[
 (2\cdot12^2+12)-45=255.
\]
The two-jet trace identity gives $L(T_P^2)=2$.
For a rational representation with $T_P^4=1$, eigenvalues are
$1,-1,i,-i$, and the multiplicities of $i$ and $-i$ agree in each
cohomological degree. Let $e_+,e_-,e_i=e_{-i}$ be the alternating
multiplicities. Then
\[
 \chi(F_P)=e_++e_-+2e_i,\qquad
 L(T_P^2)=e_++e_--2e_i.
\]
It follows that $\chi(F_P)-L(T_P^2)=4e_i$, and hence $\chi(F_P)\equiv2$
modulo four. Since $n=12$ is even, $\nu_{X_n}=1-\chi(F_P)\equiv-1$ modulo
four. Reduced length-twelve subschemes have value $1$.
The product and \'etale properties of Behrend functions give the
last assertion after adding disjoint reduced points.
\end{proof}

\subsection{The proof of (ii), (iii) in Theorem \ref{thm:family}}

We evaluate the two Behrend function values $\nu_{X_{9}}(B_2)=2$ and $\nu_{X_{12}}(D_1)=3$.  From Theorem \ref{thm:intromain}, it is sufficient to evaluate
\begin{align*}
 b_{3,B_2}=1,&\qquad \chi(F_{B_2})=3;\\
 b_{4,D_1}=1,&\qquad \chi(F_{D_1})=-2.
\end{align*}
We divide the calculation into several steps:

\subsubsection{Localizing the polynomial calculations}
After a linear change of  coordinates, $B_2$ becomes
\begin{equation}\label{eq:B2prime}
 B'_2=\mathfrak{m}^3+(y^2-xz).
\end{equation}
All nondegenerate quadratic forms in three variables are equivalent
over $\C$, so $\nu_{X_{9}}(B'_2)=\nu_{X_{9}}(B_2)$. Both $B'_2$ and $D_1$ are fixed by a torus $T_0$ action
\[
 T_0:\quad (x,y,z)\longmapsto(tx,y,t^{-1}z).
\]
The trace potential is $T_0$-invariant.

For an invariant polynomial on a representation, Euler localization
on a projective complement leaves only the weight-zero subspace, and 
on a nonzero weight-$j$ subspace, an invariant homogeneous polynomial
of positive degree vanishes identically. We apply these to the numbers
$b_3, b_4$ in Theorem \ref{thm:intromain}, we get
\begin{align}
 b_3&=\chi_c\bigl(\PP(K^{T_0})\setminus V(c|_{K^{T_0}})\bigr),\label{eq:loc3}\\
 b_4&=\chi_c\bigl(\PP(\Sigma\cap K^{T_0})\setminus V(h|_{K^{T_0}})\bigr).
 \label{eq:loc4}
\end{align}
Moreover $\Sigma\cap K^{T_0}$ is the critical locus of
$c|_{K^{T_0}}$, and  all derivatives in moving directions vanish at
fixed vectors by invariance. One may choose the quadratic complement
$W$ invariant, and the effective quartic on $K^{T_0}$ is computed
entirely in the fixed smooth chart. Equivalently, Euler localization
on an invariant Milnor fibre gives
$\chi(F_f)=\chi(F_{f|_{U_n^{T_0}}})$.

\subsubsection{The matrix chart and its exact certificates}\label{sec:chart}

We specify the chart so the two polynomial calculations are
reproducible without choosing implicit quotient coordinates.
Use the following ordered monomial bases, indexed from zero:
\begin{align*}
 \mathcal B_B&=(1,z,y,x,z^2,yz,xz,xy,x^2),\\
 \mathcal B_D&=(1,z,y,x,z^2,yz,y^2,xz,xy,z^3,yz^2,y^2z).
\end{align*}
For each nonconstant basis monomial, select the first variable in
the order $x,y,z$ whose removal leaves a basis monomial. Freeze the
corresponding multiplication-matrix column to the appropriate unit
vector. Set the cyclic vector to $e_0$. These $n-1$ columns define a
tree of words; the condition that those words form a basis gives an
affine chart on $M$. The other matrix entries are independent.

Let $w_i$ be the $x$-exponent minus the $z$-exponent of the $i$th
basis monomial. In the $T_0$-fixed chart retain only entries
\[
 A_{ij}:w_i=w_j+1,\qquad B_{ij}:w_i=w_j,\qquad
 C_{ij}:w_i=w_j-1.
\]
At the center they are the multiplication matrices of the indicated
quotient algebra. Translate the unfrozen entries by these constants.
The potential has exactly a quadratic and a cubic term, since all
three matrices depend affinely on the coordinates.

Exact Hessian computations give

\begin{center}
\begin{tabular}{@{}lrrrr@{}}
\toprule
 & Full chart dimension & Full Hessian rank
 & Fixed chart dimension & Fixed Hessian rank\\
\midrule
$B'_2$ &171&128&35&28\\
$D_1$  &300&255&60&53\\
\bottomrule
\end{tabular}
\end{center}

In both fixed charts the Hessian kernel is seven-dimensional.
Here is an explicit parametrization by $u_0,\cdots,u_6$.
The entries below are first-order increments, and  every entry not listed
is zero. Matrix indices are zero-based and denote (row,column).

\begin{center}
\begin{tabular}{@{}lll@{}}
\toprule
 & $B'_2$ & $D_1$\\
\midrule
$\delta A$ &
$A_{5,4}=u_4,\ A_{6,5}=u_5$ &
$A_{11,4}=u_3,\ A_{7,5}=u_4$\\
&$A_{7,6}=u_6,\ A_{8,7}=u_2$ & $A_{8,6}=u_1$\\[2pt]
$\delta B$ & $B_{2,2}=u_1-u_6,\ B_{6,2}=u_0$
&$B_{6,6}=u_2-u_3,\ B_{7,6}=u_0$\\
&$B_{4,4}=u_3,\ B_{5,5}=u_1+u_4-u_6$
&$B_{7,7}=u_4,\ B_{8,8}=u_1$\\
&$B_{6,6}=u_5,\ B_{7,7}=u_1,\ B_{8,8}=u_2$
&$B_{9,9}=u_5,\ B_{10,10}=u_6,\ B_{11,11}=u_2$\\[2pt]
$\delta C$ & $C_{4,5}=u_3,\ C_{5,6}=u_4$
&$C_{11,7}=u_3,\ C_{7,8}=u_4$\\
&$C_{6,7}=u_5,\ C_{7,8}=u_6$
&$C_{9,10}=u_5,\ C_{10,11}=u_6$\\
\bottomrule
\end{tabular}
\end{center}

\subsubsection{The calculation for $B_2$}\label{sec:B2}

Restricting the cubic $\Tr(\delta A[\delta B,\delta C])$ to the
kernel in Section~\ref{sec:chart}, then setting
\[
 a=u_1-2u_5,\quad b=u_2-u_5,\quad c=u_3-u_5,
 \quad d=u_4,\quad e=u_6,
\]
gives the cubic in five independent variables
\begin{equation}\label{eq:cubic}
 P(a,b,c,d,e)=abe-acd-b^2e+c^2d-cd^2+cde.
\end{equation}
The other two kernel coordinates are absent. Projection away from
those two coordinates is an affine-plane bundle on the complement,
so \eqref{eq:loc3} reduces to
\[
 b_{3,B_2}=\chi_c(\PP^4\setminus V(P)).
\]

\begin{lemma}\label{lem:cubic-euler}
$\chi_c(\PP^4\setminus V(P))=1$.
\end{lemma}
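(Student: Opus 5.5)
The plan is to compute $\chi_c(\PP^4\setminus V(P))$ by projecting from a point of $V(P)$ through which $P$ is linear. The first step is to regroup \eqref{eq:cubic} in terms of $e$ and $d$:
\[
 P=e\,\alpha(a,b,c,d)+d\,\beta(a,b,c,d),\qquad
 \alpha=b(a-b)+cd,\qquad \beta=c(c-a-d).
\]
So $P$ is linear in $e$, and the point $p=[0:0:0:0:1]$ lies on $V(P)$.

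Next, project from $p$ to $\PP^3$ with coordinates $[a:b:c:d]$. The complement $\PP^4\setminus\{p\}$ fibres over $\PP^3$ with fibres $\A^1$, with affine coordinate $e$ on the line through $p$ and $q=[a:b:c:d]$. On the fibre over $q$, the equation $P\neq 0$ reads $e\,\alpha(q)+d\,\beta(q)\neq0$, up to the choice of representative. This gives three cases.
\begin{itemize}
 \item If $\alpha(q)\ne0$, exactly one value of $e$ is excluded, so the fibre of the complement is $\C^*$ and contributes $0$.
 \item If $\alpha(q)=0$ and $d\beta(q)=0$, the fibre of the complement is empty.
 \item If $\alpha(q)=0$ and $d\beta(q)\ne0$, the whole fibre $\A^1$ lies in the complement and contributes $1$.
\end{itemize}
The point $p$ itself is in $V(P)$, so it contributes nothing. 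These fibrations are Zariski-locally trivial over each constructible stratum, so Euler characteristics multiply. We obtain
\[
 \chi_c(\PP^4\setminus V(P))=\chi_c\bigl(V(\alpha)\setminus V(d\beta)\bigr)\subset\PP^3 .
\]

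It then remains to compute this quadric-surface complement. On $V(\alpha)\setminus V(d\beta)$ we have $d\neq0$, so we may set $d=1$. Then $\alpha=0$ solves uniquely as $c=-b(a-b)$, which identifies $V(\alpha)\cap\{d\ne0\}$ with $\A^2$ in the coordinates $(a,b)$. Substituting $s=b$ and $t=a-b$ gives $c=-st$, and the remaining conditions $c\neq0$ and $c\neq a+1$ become
\[
 st\ne0,\qquad st+s+t+1=(s+1)(t+1)\ne0 .
\]
The locus is therefore $(\C\setminus\{0,-1\})^2$, whose Euler characteristic is $(-1)^2=1$. This proves the lemma.

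The main point requiring care is the middle step: checking that the stratified projection is legitimate, with the fibres genuinely $\A^1$ or $\C^*$ and consistent under rescaling of representatives. This works because $P$ is exactly of degree one in $e$ and $d$ has weight matching the scaling. Once that is verified, the final computation is elementary.
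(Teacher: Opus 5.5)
Your proof is correct and follows essentially the same route as the paper. The paper projects from $[1:0:0:0:0]$, using that $P=aA+B$ is linear in $a$, while you project from $[0:0:0:0:1]$, using linearity in $e$. In both cases the count reduces to a quadric-surface complement in $\PP^3$, parametrized as a product of two twice-punctured lines with Euler characteristic $(-1)^2=1$.
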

\begin{proof}
Project from $[1:0:0:0:0]\in V(P)$ to $\PP^3_{[b:c:d:e]}$ so that $\PP^4\setminus V(P)$ is decomposed into fibers over $\PP^3_{[b:c:d:e]}$. Since the point at infinity is always in \(V(P)\), the fibers are subsets of the affine line \(\mathbb{C}_{a}\).
Write
\[
 P=aA+B,\qquad A=be-cd,\qquad B=-b^2e+c^2d-cd^2+cde.
\]
The fibre over a point with $A\ne0$ is an affine line minus one
point, of Euler characteristic zero. Over $A=0,B\ne0$ it is an
affine line, of characteristic one; over $A=B=0$ it is empty.
On the quadric $A=0$,
\[
 B=cd(-b+c-d+e).
\]
Thus we need $c,d\ne0$, which also forces $b,e\ne0$.
Normalize $d=1$, set $b=u$, $c=v$, and hence $e=v/u$.
The last nonvanishing condition becomes
\[
 -u+v-1+v/u=\frac{(u+1)(v-u)}{u}\ne0.
\]
Using $v/u$ as the second coordinate, this locus is
\[
 (\C^*\setminus\{-1\})\times(\C^*\setminus\{1\}).
\]
Its compactly supported Euler characteristic is $(-1)^2=1$.
\end{proof}

Since $T_{B_2}^3=1$, Theorem \ref{thm:intromain} gives $\chi(F_{B_2})=3$.
The sign in \eqref{eq:behrend} is $(-1)^9=-1$, so
\[
\nu_{X_9}(B_2)=-(1-3)=2.
\]

\subsubsection{The quartic calculation for $D_1$}\label{sec:D1}

The cubic restricted to the fixed Hessian kernel is identically
zero. This also follows from weights, and  the kernel coordinates have
weights $0,1,1,1,1,1,1$ under $(x,y,z)\mapsto(t^2x,ty,tz)$,
whereas the potential has weight $4$.
There is no ordinary degree-three monomial of weight $4$ in those
coordinates. Consequently the fixed critical cone is the whole
seven-dimensional kernel.

The original matrix chart has no quartic term. Its effective
quartic is nevertheless nonzero:
\[
 h=-\tfrac12\ell^t B_W^{-1}\ell.
\]
With the linear coordinates
\[
 a=u_1-u_6,\quad b=u_2-\tfrac32u_6,\quad c=u_3,
 \quad d=u_4-\tfrac12u_6,\quad e=u_5-\tfrac12u_6,
\]
this quartic is
\begin{equation}\label{eq:quartic}
 Q(a,b,c,d,e)=\frac14
 \left(\bigl((a+c)(b-a)+(a-c)d-d^2+e^2\bigr)^2
       +4c(b-d)(d^2-e^2)\right).
\end{equation}
The two remaining kernel coordinates are again absent. The exact
Hessian-inverse calculation yielding \eqref{eq:quartic} is checked
using Python code, and  omitting this correction would give an
incorrect answer. Equation~\eqref{eq:loc4} gives
\[
 b_{4,D_1}=\chi_c(\PP^4\setminus V(Q)).
\]

\begin{lemma}\label{lem:quartic-euler}
$\chi_c(\PP^4\setminus V(Q))=1$.
\end{lemma}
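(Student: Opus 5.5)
The plan is to imitate the proof of Lemma~\ref{lem:cubic-euler}: project from a point of $V(Q)$ along a coordinate in which $Q$ has low degree, then count roots fibrewise. The natural choice is the coordinate $c$, since both summands of \eqref{eq:quartic} are at most linear in $c$ before squaring. Write
\[
 L=b-a-d,\qquad G_0=a(b-a)+ad-d^2+e^2,\qquad M=(b-d)(d^2-e^2),
\]
so that the bracket in \eqref{eq:quartic} equals $G_0+cL$. Then
\[
 4Q=(G_0+cL)^2+4cM=L^2c^2+(2G_0L+4M)\,c+G_0^2 .
\]
At $[0:0:1:0:0]$ all of $L,G_0,M$ vanish, so this point lies in $V(Q)$. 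Projecting from it to $\PP^3_{[a:b:d:e]}$ makes each fibre of $\PP^4\setminus V(Q)$ an affine line $\C_c$ minus the roots of the quadratic $q(c)=4Q$. The fibre's Euler characteristic is $1-\#\{\text{distinct roots}\}$ when $q\not\equiv0$, and $0$ when $q\equiv0$.

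Next I stratify $\PP^3$ by the root count, using the discriminant
\[
 (2G_0L+4M)^2-4L^2G_0^2=16\,M\,(G_0L+M).
\]
The strata and fibre contributions are as follows.
\begin{enumerate}
\item If $L\ne0$ and $M(G_0L+M)\ne0$, there are two distinct roots, so the fibre contributes $-1$.
\item If $L\ne0$ and $M(G_0L+M)=0$, there is a double root, so the fibre contributes $0$.
\item If $L=0$ and $M\ne0$, the equation is linear with one root, contributing $0$.
\item If $L=0$, $M=0$ and $G_0\ne0$, there are no roots, contributing $1$.
\item If $L=M=G_0=0$, then $q\equiv0$ and the fibre is empty.
\end{enumerate}
Euler integration along the projection therefore gives
\[
 \chi_c(\PP^4\setminus V(Q))=\chi_c(Z_1)-\chi_c(Z_2),
\]
where
\[
 Z_1=\{L=0,\ M=0,\ G_0\neq0\}\subset\PP^3,\qquad
 Z_2=\{L\neq0,\ M\neq0,\ G_0L+M\neq0\}\subset\PP^3 .
\]

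The set $Z_1$ is easy. Substituting $b=a+d$ identifies $\{L=0\}$ with $\PP^2_{[a:d:e]}$, and there
\[
 M=a(d-e)(d+e),\qquad G_0=2ad-d^2+e^2 .
\]
So $Z_1$ is the union of the three lines $a=0$, $d=e$, $d=-e$ with the conic $G_0=0$ removed. I would compute its Euler characteristic by inclusion--exclusion. The three lines meet pairwise at the points $[0:1:1]$, $[0:1:-1]$ and $[1:0:0]$. The conic meets $a=0$ at $d=\pm e$, and meets $d=\pm e$ at $2ad=0$. I will carefully list the intersection points and subtract them.

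For $Z_2$ I work in the affine chart $L=1$, i.e.\ $b=1+a+d$, which is $\A^3_{a,d,e}$. Here $Z_2$ is the complement of the hypersurface
\[
 (1+a)(d^2-e^2)\bigl(G_0+(1+a)(d^2-e^2)\bigr)=0 .
\]
I would change to coordinates $p=d-e$, $s=d+e$ and fibre over $(a,p)$ with $1+a\ne0$ and $p\ne0$. In the $s$-line we must remove $s=0$ together with the zeros of a polynomial in $s$ of degree at most two. I then compute the fibre Euler characteristic by counting distinct roots, exactly as above, together with a second discriminant stratification.

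I expect this last computation, $\chi_c(Z_2)$, to be the main obstacle: it is a three-dimensional hypersurface complement requiring a nested discriminant analysis, with care over degenerate strata where the leading coefficient in $s$ vanishes. The target is $\chi_c(Z_1)-\chi_c(Z_2)=1$. As a cross-check, I would redo the count projecting along $b$ instead, since $Q$ is also quadratic in $b$.
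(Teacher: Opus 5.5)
Your plan is correct and is essentially the paper's argument, except that the paper projects from $[0:1:0:0:0]$, so that $Q$ is quadratic in $b$ with leading coefficient $(a+c)^2/4$, rather than from $[0:0:1:0:0]$; your reduction $\chi_c(\PP^4\setminus V(Q))=\chi_c(Z_1)-\chi_c(Z_2)$ is right. The step you expected to be the main obstacle is in fact immediate, and two facts finish the computation:
\begin{itemize}
\item In the chart $L=1$ one has $G_0+M=a\bigl((1+d)^2-e^2\bigr)=a(1+d-e)(1+d+e)$. So in the coordinates $(a,\,d-e,\,d+e)$ you get $Z_2\cong(\C\setminus\{0,-1\})^3$ and $\chi_c(Z_2)=-1$.
\item The conic $G_0=2ad-d^2+e^2$ passes exactly through the three pairwise intersection points of the lines. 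Hence $Z_1$ is three disjoint copies of $\C^*$ and $\chi_c(Z_1)=0$.
\end{itemize}
This gives $\chi_c(\PP^4\setminus V(Q))=0-(-1)=1$.
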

\begin{proof}
Project from $[0:1:0:0:0]\in V(Q)$ to $\PP^3_{[a:c:d:e]}$.
As a polynomial in $b$, write $Q=A b^2+B b+C$. Direct expansion
of \eqref{eq:quartic} gives
\begin{align*}
 A&=(a+c)^2/4,\\
 \Delta:=B^2-4AC
 &=-ac(d-e)(d+e)(a+c-d-e)(a+c-d+e).
\end{align*}
Where $A\ne0$, the fibre of the complement has characteristic
$-1$ if $\Delta\ne0$ and $0$ if $\Delta=0$.
Normalize $a+c=1$, and put $t=a$, $u=d+e$, $v=d-e$.
The locus $A\ne0,\Delta\ne0$ is
\[
 (\C\setminus\{0,1\})^3,
\]
whose characteristic is $-1$. Thus, base times fiber  contribution is therefore $1$.

On $a+c=0$, so $c=-a$, the other coefficients become
\[
 B=-a(d-e)(d+e),\qquad
 C=a^2d^2+\tfrac14(d^2-e^2)^2.
\]
The locus $B\ne0$ has fibres $\A^1$ minus one point, and contributes
zero. The locus $B=0,C\ne0$ is a disjoint union of three copies
of $\C^*$: in coordinates $[a:u:v]$, where $u=d+e$, $v=d-e$,
these are
\[
 \{a=0,uv\ne0\},\qquad
 \{u=0,av\ne0\},\qquad
 \{v=0,au\ne0\}.
\]
Every intersection has $C=0$ and is excluded. These three strata
also contribute zero. Where $B=C=0$ the fibre is empty.
The total is $1$.
\end{proof}

The full Hessian has odd rank $255$ (the fixed Hessian has odd rank
$53$). Since $T_{D_1}^4=1$, therefore
\[
 \chi(F_{D_1})=2-4b_{4,D_1}=2-4=-2.
\]
The length is even, so
\[
 \nu_{X_{12}}(D_1)=1-(-2)=3.
\]

\appendix

\section{The zeta function and weak monodromy conjecture}

In this appendix we collect some known facts of local motivic, local monodromy zeta function of singularity germs $(M,f, P)$ and related them to the weak monodromy conjecture in \cite{BMT}.
We denote the potential $f_n: M\to \C$ for the Hilbert scheme $X_n=\Hilb^n(\C^3)$ by $f$.

\subsection{Monodromy zeta function}
Use the local monodromy $T_P$ of the Milnor fiber $F_P$, we define
\begin{equation}\label{eq:zeta}
 \zeta_{f,P}(t)=\prod_j
 \det(1-tT_P\mid H^j(F_P,\Q))^{(-1)^{j+1}}.
\end{equation}
For a nonzero rational function $P(t)/Q(t)$ define
$\deg(P/Q)=\deg P-\deg Q$; cancellation does not change this number.
Since $T_P$ is invertible, each determinant has degree $\dim H^j(F_P)$.
Therefore
\begin{equation}\label{eq:zetacriterion}
 \quad\deg\zeta_{f,P}=-\chi(F_P),\qquad
 \chi(F_P)=0\ \Longleftrightarrow\ \deg\zeta_{f,P}=0.\quad
\end{equation}
This  uses degree, not just the existence or location of roots and poles.

Let
$e_\lambda=\sum_j(-1)^j\dim H^j(F_P,\C)_\lambda$, where generalized
eigenspaces are used. Then
\[
 \zeta_{f,P}(t)=\prod_\lambda(1-\lambda t)^{-e_\lambda},\qquad
 \chi(F_P)=\sum_\lambda e_\lambda,\qquad
 L(T_P^m)=\sum_\lambda\lambda^m e_\lambda.
\]
Jordan block sizes disappear from these expressions. At a critical point,
A'Campo imposes $\sum_\lambda\lambda e_\lambda=0$; this does not in
general force $\sum_\lambda e_\lambda=0$. It does so when $1$ is the
only eigenvalue, which explains Corollary~\ref{cor:unipotent}.

\subsection{A resolution criterion}

Take a log resolution $\pi:\widetilde M\to M$ of $f^{-1}(0)$.
Let $E_i$ be the components of the total transform, $N_i$ their
multiplicities, and
$E_i^\circ=E_i\setminus\bigcup_{j\ne i}E_j$. A'Campo's formula gives
\begin{equation}\label{eq:acampo}
 \zeta_{f,P}(t)=\prod_i(1-t^{N_i})^{-\chi_c(E_i^\circ\cap\pi^{-1}(P))}.
\end{equation}
Hence
\begin{equation}\label{eq:resolution}
 \chi(F_P)=\sum_iN_i\chi_c(E_i^\circ\cap\pi^{-1}(P)).
\end{equation}
The formula and its Lefschetz-number form are classical, see \cite{AC}, \cite{Loeser}. 
It supplies an exact geometric criterion. For example, if every
indicated stratum has a locally trivial $\C^*$-fibration, all summands
vanish. Cancellation between nonzero summands is also possible.

\subsection{The relevant mixed Hodge data}

The nearby-cycle mixed Hodge module determines $H^*(F_P)$ with
its monodromy, and therefore determines every $e_\lambda$ above.
Equivalently, form its alternating Hodge polynomial in terms $(u,v)$ on each generalized
eigenspace and specialize at $(u,v)=(1,1)$. The sum of these
specializations is $\chi(F_P)$.

For the perverse-normalized vanishing-cycle 
$\mathcal V=\varphi_f[-1]\Q_M^{X_n}[N]$, with the constant object written
in unshifted notation, the eigenwise stalk Euler numbers satisfy
\[
 \chi((i_x^*\mathcal V)_\lambda)
 =(-1)^{N-1}(e_\lambda-\delta_{\lambda,1}),
\]
where $i_x: x\hookrightarrow X_n$ is the inclusion.
The subtraction at eigenvalue $1$ is the constant term in the
nearby/vanishing-cycle triangle. Summing gives
$\chi(i_x^*\mathcal V)=(-1)^N(1-\chi(F_P))$.
Thus local mixed Hodge modules determine the answer, but weights or
Hodge types without alternating eigenwise multiplicities do not.

Unipotence here is a statement about the \emph{local} nearby-cycle
monodromy at the selected critical point. Qin--Zhang's comparison
concerns global relative cohomology and limiting structures at
infinity, see \cite[Theorem 1.1.1]{QZ}. 
Local unipotence cannot be inferred from global unipotence without an
additional theorem controlling the direct image and individual stalks.

\subsection{Two distinct zeta functions}

The \emph{local topological zeta function} is a different invariant:
\[
 Z_{\mathrm{top},f,P}(s)
 =\sum_{\varnothing\ne I}
 \chi_c(E_I^\circ\cap\pi^{-1}(P))
 \prod_{i\in I}\frac1{N_i s+\nu_i},
\]
where $\nu_i$ are log discrepancies. 
Here $\{E_i\}_{i\in \mathcal{I}}$ is the set of irreducible components of the exceptional divisors of the resolution $\pi: \widetilde M\to M$.  For $I\subset \mathcal I$, we set 
$$
E_I:=\bigcap_{i\in I}E_i
$$
and 
$$
E_I^{\circ}:=E_I\setminus \bigcup_{j\notin \mathcal I}E_j.
$$

Let $m_i$ be the multiplicity of the component $E_i$ in $E$. Let $m_I=\gcd(m_i)_{i\in \mathcal I}$.  Let $U$ be an affine Zariski open subset of $\widetilde M$ such taht on $U$, $f\circ\pi=uv^{m_I}$, with $u$ a unit in $U$ and $v$ a morphism from $U$ to $\C$.  The restriction of $E_I^{\circ}\cap U$, which we denote by $\widetilde E_I^{\circ}\cap U$, is defined by 
$$
\{(z,y)\in \C\times (E_I^{\circ}\cap U)| z^{m_I}=u^{-1}\}.
$$
The $E_I^{\circ}$ can be covered by the open subset $U$ of $\widetilde M$.  Glue  together all such constructions and get the Galois cover 
$$
\widetilde E_I^{\circ}\to E_I^{\circ}
$$
with Galois group $\mu_{m_I}$.  

The weak topological monodromy
conjecture predicts that a pole $s_0$ gives a monodromy eigenvalue
$e^{2\pi i s_0}$, with the precise formulation allowing an appropriate
point of the zero fibre (or a nearby point in local formulations).
It does not specify alternating eigenvalue multiplicities at a chosen
point, see \cite{DL_92}.  The hyperplane arrangement version is in \cite{BMT}, which is proved in \cite{DY}.

There are three obstacles to deriving $\chi(F_P)=0$ from this conjecture:
it is a pole-to-eigenvalue implication rather than a converse; it
does not control the alternating multiplicities $e_\lambda$; and the
eigenvalue need not be detected at the particular point under study.
Even strengthening the conclusion to that point would not resolve the
multiplicity issue.

\subsection{An explicit test}

On $\C^2$, compare $f=x^m$ and $g=(xy)^m$, with $m\geq2$, at the
origin. Both have critical points there and both have all $m$th roots
of unity among their monodromy eigenvalues. Directly,
\[
 \begin{array}{c|c|c|c}
 &\chi(F_P)&\zeta(t)&Z_{\mathrm{top},0}(s)\\\hline
 x^m&m&(1-t^m)^{-1}&(ms+1)^{-1}\\
 (xy)^m&0&1&(ms+1)^{-2}
 \end{array}
\]
For $x^m$, the fibre is $m$ disks, with cyclic permutation on $H^0$.
For $(xy)^m$, it is $m$ annuli, and the same permutation appears in
$H^0$ and $H^1$, so it cancels in the zeta function. The topological
zeta functions follow from the identity normal-crossing resolution.
Thus the same eigenvalue set and the same topological-zeta pole
location coexist with different Euler characteristics. This does not
claim that the two full topological zeta functions coincide; they do not.

\subsection{Denef-Loeser motivic zeta function}

For a germ $(M,f,P)$ on the smooth manifold $M$, Denef-Loeser's motivic zeta  series is
\[
 \mathcal Z_{f,P}^{\hat\mu}(u)
 =\sum_{m\ge1}[\mathcal X_{m,P}(f),\mu_m]\mathbb L^{-mN}u^m.
\]
Its motivic limit is the motivic nearby fibre:
$\mathcal S_{f,P}=-\lim_{u\to\infty}\mathcal Z_{f,P}^{\hat\mu}(u)$.
The jet trace formula supplies a genuine link:
\[
 \chi_c(\mathcal Z_{f,P}^{\hat\mu}(u))
 =\sum_{m\ge1}L(T^m)u^m
 =u\frac{d}{du}\log\zeta_{f,P}(u),
\]
where Euler realization is coefficientwise after forgetting the
finite-group action. Under $T^m$ unipotent the coefficient of $u^m$
is precisely $\chi(F_P)$. This concerns the local contact series, not
the coefficient of $q^m$ in the DT invariants series.  A non-archimedean version of the motivic Donaldson-Thomas series was studied in \cite{Jiang}. 

\subsection{The missing pole information}
The Denef-Loeser motivic zeta function is given by 
\[
 \mathcal Z_{f,P}^{\hat\mu}(u)=
 \sum_{\varnothing\ne I}(\mathbb L-1)^{|I|-1}
 [\widetilde E_I^\circ|_{\pi^{-1}(P)}]
 \prod_{i\in I}
 \frac{\mathbb L^{-\nu_i}u^{N_i}}
      {1-\mathbb L^{-\nu_i}u^{N_i}}.
\]
Here the tilde denotes the standard cyclic cover. The topological
zeta function involves the ratios $-\nu_i/N_i$ and possible
cancellations between resolution strata. The nearby-cycle limit
replaces each displayed fraction by $-1$, removing its explicit
$\nu_i$ dependence. Integration to an absolute virtual motive loses
further information about the location of the stalks.

A simple illustration is provided by $f=xy$ and $g=xy^2$ on $\C^2$.
Both have general fibre of class $\mathbb L-1$ and central fibre of
class $2\mathbb L-1$. The equivariant fibre-difference formula gives
absolute virtual motive $1$ for both critical loci. But at the origin
\[
 Z_{\mathrm{top},xy,0}(s)=\frac1{(s+1)^2},\qquad
 Z_{\mathrm{top},xy^2,0}(s)=\frac1{(s+1)(2s+1)}.
\]
Thus an integrated virtual motive does not in general determine
local topological poles. This example is not a counterexample to the
monodromy conjecture: the $-1$ eigenvalue predicted by the second
pole occurs at nearby points $(a,0)$, $a\ne0$.

\subsection{The higher-power}
A bound $T_P^m$ unipotent says that eigenvalues at $P$ lie in $\mu_m$;
it does not say that every element of $\mu_m$ occurs. Nor does a bound
at one point control monodromy at every nearby point. Consequently,
even a proposed pole-denominator bound requires careful localization.
A valid proof would need both a theorem controlling actual poles and
a nonvanishing statement for their predicted eigenvalues. For the
strong conjecture it would also need to locate exact Bernstein--Sato
roots, rather than their exponentials modulo integral shifts.

\subsection{A positive result}
There is a direct local calculation for $f_n$.

\begin{proposition}\label{prop:smoothzeta}
At a reduced length-$n$ subscheme $P$, with $n\ge2$, put
$r=2n(n-1)$. Then
\begin{equation}\label{eq:smoothzeta}
 Z_{\mathrm{top},f_n,P}(s)
 =\frac{r}{(2s+r)(s+1)}
 =\frac{n(n-1)}{(s+n(n-1))(s+1)}.
\end{equation}
The weak monodromy conjecture holds for this germ, with the predicted
eigenvalues occurring already at $x$.
\end{proposition}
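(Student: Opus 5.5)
The plan is to reduce the germ $(M,f_n,P)$ at a reduced point to a Morse--Bott normal form, resolve that by a single blow-up, and read off $Z_{\mathrm{top}}$ from the resulting strata over $P$.

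\emph{Step 1: the normal form at $P$.} At a reduced length-$n$ subscheme, $X_n$ is smooth of dimension $3n$ near $P$. The critical scheme $\Crit(f_n)$ is therefore smooth with $\dim T_PX_n=3n$. By Proposition~\ref{prop:matrixlift} the Hessian rank is
\[
 r=N-3n=2n^2+n-3n=2n(n-1),
\]
which is even and positive for $n\ge2$. Since the critical locus is smooth and the Hessian is nondegenerate in the normal directions, the holomorphic Morse--Bott lemma (Morse lemma with parameters) gives analytic coordinates $(z_1,\dots,z_r,y_1,\dots,y_{3n})$ centred at $P$ in which
\[
 f_n=z_1^2+\cdots+z_r^2 .
\]
I would invoke the Morse--Bott lemma directly; alternatively, the local product structure of $\Hilb^n(\C^3)$ near $n$ distinct points reduces it to $n$ copies of the smooth germ.

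\emph{Step 2: resolution and the strata over $P$.} Blow up $M$ along the smooth critical locus $\{z=0\}$, which has codimension $r$. This gives an embedded resolution with two components:
\begin{itemize}
\item the exceptional divisor $E_1$, with $N_1=2$ and $\nu_1=r$;
\item the strict transform $E_0$ of $\{f_n=0\}$, with $N_0=1$ and $\nu_0=1$.
\end{itemize}
The strict transform is a smooth quadric bundle over the centre and meets $E_1$ transversally. Over $P$ the relevant strata are as follows.
\begin{itemize}
\item $\pi^{-1}(P)\cap E_1\cong\PP^{r-1}$.
\item $\pi^{-1}(P)\cap E_0\cap E_1\cong Q^{r-2}$, a smooth quadric in $\PP^{r-1}$. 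Since $r-2$ is even, $\chi(Q^{r-2})=r$.
\item $\pi^{-1}(P)\cap E_1^\circ=\PP^{r-1}\setminus Q^{r-2}$, with Euler characteristic $r-r=0$.
\item $E_0^\circ\cap\pi^{-1}(P)$ is empty.
\end{itemize}
Substituting into the defining sum for $Z_{\mathrm{top},f_n,P}$, only $I=\{0,1\}$ contributes, giving
\[
 Z_{\mathrm{top},f_n,P}(s)=\frac{r}{(2s+r)(s+1)},
\]
and $r=2n(n-1)$ gives the second form in \eqref{eq:smoothzeta}.

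\emph{Step 3: the monodromy conjecture.} The poles are $s=-1$ and $s=-r/2=-n(n-1)$, both integers. The predicted eigenvalue in each case is $e^{2\pi i s_0}=1$. In the normal form, the Milnor fibre at $P$ is homotopic to $\mathbb S^{r-1}$. The monodromy acts as the identity on $H^0$, and as $(-1)^r=1$ on $H^{r-1}$. Hence the eigenvalue $1$ occurs already at $P$, and the weak conjecture holds for this germ.

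\emph{Main obstacle.} The step needing care is Step 2's use of analytic rather than algebraic coordinates. One must check that the local topological zeta function, defined via an embedded resolution restricted to $\pi^{-1}(P)$, is independent of the resolution and invariant under analytic coordinate change. It also has to tolerate the smooth parameter directions $y$. I would handle this by using Denef--Loeser's independence of resolution, or the arc-space description of the local motivic zeta function specialized to $\chi$, which is intrinsic to the analytic germ. The Euler characteristic of the even-dimensional smooth quadric is standard.
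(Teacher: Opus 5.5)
Your proposal is correct and follows essentially the same route as the paper: the Morse lemma with parameters gives $\sum z_i^2$ in $r=2n(n-1)$ transverse variables, a single blow-up has numerical data $(2,r)$ and $(1,1)$, $\chi(Q_{r-2})=r=\chi(\PP^{r-1})$ leaves only the $E_0\cap E_1$ stratum over $P$, and the antipodal monodromy acts by $(-1)^r=1$. Your blow-up of $M$ along the critical locus coincides with the paper's blow-up of the origin in the transverse $\C^r$ times the smooth factor, and your remark on the well-definedness of $Z_{\mathrm{top}}$ for the analytic normal form addresses a point the paper simply takes for granted.
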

\begin{proof}
The critical scheme is smooth of dimension $3n$ near $P$.
The holomorphic Morse lemma with parameters therefore gives a
nondegenerate quadratic form in $r=N-3n$ transverse variables,
with $3n$ free variables. Smooth factors do not change the local
zeta function or Milnor cohomology.

Blow up the origin in the transverse $\C^r$. The exceptional divisor
has numerical data $(N,\nu)=(2,r)$; the strict transform has data
$(1,1)$. Its intersection with the exceptional divisor is the smooth
projective quadric $Q_{r-2}\subset\PP^{r-1}$. As $r$ is even,
$\chi(Q_{r-2})=r=\chi(\PP^{r-1})$. The resolution formula gives
\[
 \frac{\chi(\PP^{r-1}\setminus Q_{r-2})}{2s+r}
 +\frac{\chi(Q_{r-2})}{(2s+r)(s+1)}
 =\frac{r}{(2s+r)(s+1)}.
\]
Both poles, $-1$ and $-r/2$, are integers. The Milnor fibre retracts
onto $\mathbb{S}^{r-1}$; its monodromy is the antipodal action, acting on
reduced cohomology by $(-1)^r=1$. The eigenvalue predicted by either
pole is $1$, and it occurs in the fibre cohomology.
\end{proof}

For $n=2$ the formula is $2/((s+2)(s+1))$; for $n=3$ it is
$6/((s+6)(s+1))$. In both cases the alternating monodromy zeta function
is $1$, because $H^0$ and $H^{r-1}$ have the same eigenvalue and
opposite Euler signs. Thus a trivial alternating monodromy zeta
function can coexist with nontrivial topological poles.
For $n=1$ the trace potential is identically zero, so the usual
nonconstant-germ topological-zeta question is excluded.
This proposition proves a local case, not the conjecture at all
singular Hilbert points.

\end{document}